
\documentclass[12pt,a4paper]{amsart}
\usepackage{amssymb}
\usepackage{amsfonts}
\usepackage[top=35mm, bottom=35mm, left=30mm, right=30mm]{geometry}
\usepackage[colorlinks=true,citecolor=blue]{hyperref}
\usepackage{mathptmx}
\usepackage{eucal}
\usepackage{graphicx}
\usepackage{mathrsfs}
\usepackage{amssymb}
\usepackage{amsmath}
\usepackage{amsthm}
\usepackage{enumerate}
\usepackage{verbatim}
\usepackage[labelformat=simple]{subcaption}

\usepackage{xcolor}
\newtheorem{thm}{Theorem}[section]
\newtheorem{cor}[thm]{Corollary}
\newtheorem{que}[thm]{Question}
\newtheorem{lem}[thm]{Lemma}
\newtheorem{prop}[thm]{Proposition}

\theoremstyle{definition}

\newtheorem{exam}[thm]{Example}
\newtheorem{rem}[thm]{Remark}
\numberwithin{equation}{section}

\newcommand{\eps}{\varepsilon}
\newcommand{\N}{\mathbb{N}}

\newcommand{\R}{\mathbb{R}}
\newcommand{\C}{\mathbb{C}}

\newcommand{\orb}{\mathrm{Orb}}

\newcommand{\htop}{h_{\mathrm{top}}}

\begin{document}
\title{Quasi-graphs, zero entropy and measures with discrete spectrum}
\author[J. Li]{Jian Li}
\address[J. Li]{Department of Mathematics,
Shantou University, Shantou 515063, Guangdong, China}
\email{lijian09@mail.ustc.edu.cn}

\author[P. Oprocha]{Piotr Oprocha}
\address[P. Oprocha]{AGH University of Science and Technology, Faculty of Applied
    Mathematics, al.
    Mickiewicza 30, 30-059 Krak\'ow, Poland
    -- and --
    National Supercomputing Centre IT4Innovations, Division of the University of Ostrava,
    Institute for Research and Applications of Fuzzy Modeling,
    30. dubna 22, 70103 Ostrava,
    Czech Republic}
\email{oprocha@agh.edu.pl}

\author[G. Zhang]{Guohua Zhang}
\address[G. Zhang]{School of Mathematical Sciences and Shanghai Center for Mathematical Sciences, Fudan University, Shanghai 200433, China}
\email{chiaths.zhang@gmail.com}

\date{\today}
\subjclass[2010]{37E25, 37B40, 37B05}
\keywords{Quasi-graph, Gehman dendrite, invariant measure,  discrete spectrum, topological entropy, topological horseshoe}

\begin{abstract}
In this paper, we study dynamics of maps on quasi-graphs characterizing their invariant measures. In particular, we prove that  every invariant measure
of quasi-graph map with zero topological entropy has discrete spectrum.
%
%
Additionally, we obtain an analog of Llibre-Misiurewicz's result relating positive topological entropy
with existence of topological horseshoes.


We also study dynamics on dendrites and show that
if a continuous map on a dendrite, whose set of all endpoints is closed and has only finitely many accumulation points,
has zero topological entropy, then  every invariant measure supported on an orbit closure has  discrete spectrum.

\end{abstract}

\maketitle

\section{Introduction}

By a \textit{topological dynamical system} or just \textit{dynamical system}, we mean a pair $(X, T)$, where $X$
is a compact metric space with a metric $d$ and $T\colon X\to X$ is a continuous map.

The motivation to consider ``graph-like'' spaces and invariant measures on them is twofold. First, recently Mai and Shi provided in \cite{MS17} very useful characterization of quasi-graphs spaces, showing that to some extent they are similar to graphs. While full characterization of $\omega$-limit sets on these spaces can be hard, characterization of invariant measures seems accessible. On dendrites, many new phenomena beyond dynamics possible on interval have been observed and many tools and characterizations were developed (e.g. see \cite{Ghasen}, \cite{Kocan} or \cite{Hoehn} to mention only a few).
Second, recent results show that M\"obius Disjointness Conjecture
holds (in the class of homeomorphism), provided that all invariant measures have discrete spectrum
(see \cite[Theorem 1.2]{HWY17}; cf. also \cite{HWZ16} and \cite{FKL17}; we refer the reader to a recent survey \cite{FKL17} on progress towards answering the conjecture).
%
Many other earlier works prove with more direct arguments that for zero topological entropy maps on: interval \cite{K15}, circle \cite{Dav} or even topological graphs \cite{LOYZ17} (see also \cite{FJ15}) the conjecture holds. While for ergodic measures of these maps discrete spectrum can be deduced from classical characterizations of dynamics of topological graph maps due to Blokh \cite{Blokh1}, the structure of the simplex of all invariant measures seems complicated,
since clearly there may be uncountably many ergodic measures (e.g. see Example~\ref{ex:24}). In the authors opinion, it is natural to expect that all invariant measures have discrete spectrum in zero entropy graph maps, however up to their knowledge there is no such result in the literature.

Motivated by the above results, we state the main question for the paper:
\begin{que}\label{q:1}
What one-dimension continua $X$ have the property that if $(X,f)$ is dynamical system with zero topological entropy, then every invariant measure of $(X,f)$ has discrete spectrum?
\end{que}
In the above question we ask about connected spaces, because on the Cantor set there are numerous examples of dynamical systems with zero topological entropy, but with ergodic measures with continuous spectrum (e.g. celebrated Jewett-Krieger theorem provides a machinery for such examples \cite{Denker}). Also in dimension two it is relatively easy to provide an example which motivates us to work in dimension one (e.g. see Remark~\ref{torus}) ; by the same example discrete spectrum of all ergodic measures does not suffice to ensure discrete spectrum of all invariant measures.

To deal with Question~\ref{q:1}  we start with a class of continua called quasi-graphs as introduced in \cite{MS17}, which are to some extent similar to topological graphs (in the same way as the Warsaw circle is ``similar'' to the unit circle; they seem to be a special case of a slightly more general notion of generalized
$\sin(1/x)$-type continua of \cite{Hoehn}).
While it is quite easy to see that the structure of $\omega$-limit sets of maps on these spaces can be much richer
than was possible on topological graphs (recall from \cite{BlokhSurv}, that we have full characterization of admissible types of $\omega$-limit sets for topological graph maps), still some similarities exist.
First of all, we are able to characterize all invariant measures of quasi-graph maps as convex combinations of finitely many invariant measures for some topological graph maps. We show that every ergodic invariant measure of a map on a
quasi-graph is isomorphic to an ergodic invariant measure of a map on some topological graph and we also obtain an analog of Llibre-Misiurewicz result relating positive topological entropy
with existence of topological horseshoes.

A partial answer to Question~\ref{q:1} is provided by the following theorem.
\begin{thm}\label{thm:quasi-graph-dis-spect}
	Let $X$ be a quasi-graph and let $f\colon X\to X$ be a continuous map with zero topological entropy. Then every invariant measure of $(X, f)$ has discrete spectrum.
\end{thm}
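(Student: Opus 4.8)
The plan is to reduce the theorem to the case of topological graph maps and then invoke the classification of zero-entropy one-dimensional dynamics. Throughout I use the structural results announced above: every invariant measure of a quasi-graph map is a finite convex combination of measures carried by topological graph maps, every ergodic invariant measure of a quasi-graph map is measure-theoretically isomorphic to an ergodic invariant measure of a topological graph map, and the Llibre--Misiurewicz-type dichotomy that positive topological entropy is equivalent to the existence of a topological horseshoe.

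The first step is the elementary observation that discrete spectrum is preserved under countable convex combinations: if $\mu=\sum_i\lambda_i\mu_i$ with $\lambda_i>0$, $\sum_i\lambda_i=1$, and each $(X,\mu_i,f)$ has discrete spectrum, then $\phi\mapsto(\sqrt{\lambda_i}\,\phi)_i$ is an $f$-equivariant isometric embedding of $L^2(X,\mu)$ into the orthogonal sum $\bigoplus_i L^2(X,\mu_i)$ whose image is an invariant subspace; the ambient space has eigenfunctions of the (direct sum of the) Koopman operators spanning it, an invariant subspace of an isometry whose eigenfunctions span the whole space again has spanning eigenfunctions, and hence $\mu$ has discrete spectrum. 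Together with the decomposition result, and with the fact --- from the variational principle and the structural statements above --- that the topological graph maps occurring in the decomposition also have zero topological entropy, this reduces Theorem~\ref{thm:quasi-graph-dis-spect} to the following statement: \emph{every invariant measure of a topological graph map with zero topological entropy has discrete spectrum}.

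For a graph map $g\colon G\to G$ with $\htop(g)=0$ and an invariant measure $\mu$ I would proceed as follows. By Poincar\'e recurrence $\mu$ is concentrated on the recurrent set, and by Blokh's structure theory for zero-entropy graph maps --- which the absence of horseshoes puts at our disposal --- the closure of the recurrent set lies in finitely many periodic cycles of subgraphs, and on each such cycle the corresponding power of $g$, restricted to its recurrent part, is a measure-theoretic almost one-to-one extension of (hence isomorphic to) a rotation on a compact metrizable monothetic group: a finite cyclic rotation, an odometer, or a circle rotation. In particular there are only \emph{finitely many} non-periodic minimal subsystems, each of discrete spectrum. One may then split $X$ modulo $\mu$ into a countable $g$-invariant measurable partition consisting of these finitely many ``cores'' together with the sets $\mathrm{Per}_p(g)$ of points of least period $p$ for $p\in\N$, which are pairwise disjoint, $g$-invariant and satisfy $g^p=\mathrm{id}$ on $\mathrm{Per}_p(g)$ (so $\mu|_{\mathrm{Per}_p(g)}$ is a rotation of order $p$, in particular of discrete spectrum). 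Every block carries a discrete-spectrum system, $L^2(X,\mu)$ is the orthogonal sum of the corresponding subspaces --- each spanned by eigenfunctions of the Koopman operator --- and therefore the eigenfunctions span $L^2(X,\mu)$; that is, $\mu$ has discrete spectrum.

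The main obstacle is exactly the finiteness (or at least countability) of the non-periodic minimal pieces, and this is where one-dimensionality is essential. By contrast the zero-entropy skew product $F(x,y)=(x,y+x)$ on $\Tor^2$ has all of its ergodic components equal to circle rotations, each of discrete spectrum, yet these form a genuine one-parameter family of mutually singular measures with varying eigenvalue groups, so Lebesgue measure --- an integral over uncountably many of them --- fails to have discrete spectrum (it even acquires a Lebesgue spectral component), and discrete spectrum of ergodic measures does not propagate to the whole simplex (cf.\ Remark~\ref{torus}). Excluding this phenomenon for quasi-graphs rests on the two ingredients above: Blokh's classification of the recurrent set and of $\omega$-limit sets of a zero-entropy graph map, which confines all ``rich'' behaviour to finitely many solenoidal or circle-rotation minimal sets; and the passage from a quasi-graph to finitely many topological graphs, which is possible because a quasi-graph differs from an honest topological graph only in finitely many $\sin(1/x)$-type arcs accumulating on subgraphs, so that identifying each such arc with its limit set collapses the dynamics --- and the whole simplex of invariant measures --- onto that of finitely many genuine graph maps.
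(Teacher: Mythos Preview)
Your reduction from quasi-graphs to topological graphs matches the paper's (Theorem~\ref{thm:quasi-graph-invarn-measure}), and the observation that a finite convex combination of discrete-spectrum measures again has discrete spectrum is exactly what the paper uses. The gap is in your treatment of the graph case. You assert that Blokh's structure theory leaves only \emph{finitely many} non-periodic minimal subsystems of a zero-entropy graph map, and then split $L^2(\mu)$ over these together with the sets $\mathrm{Per}_p(g)$. This finiteness claim is false: Example~\ref{ex:24} exhibits an interval map of zero topological entropy with \emph{uncountably many} odometers as maximal $\omega$-limit sets, hence uncountably many non-atomic, mutually singular ergodic measures with pairwise distinct eigenvalue groups. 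Blokh's classification constrains the \emph{type} of each $\omega$-limit set but not their number; the maximal solenoids are pairwise disjoint (Lemma~\ref{disjoint}) yet can form a continuum. In that example your decomposition becomes an uncountable integral of odometer measures --- precisely the torus-type scenario you yourself flag --- and Theorem~\ref{thm:discrete} no longer applies.

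The paper's remedy (Proposition~\ref{prop:collapse} and Theorem~\ref{thm:graph-map-dis-spect}) is to bypass the ergodic decomposition altogether. One constructs a factor map $\pi\colon(G,f)\to(Y,g)$ onto a graph map \emph{without Li--Yorke pairs} by collapsing, for each maximal solenoid, the non-degenerate connected components of the associated set $P(x)$ of Lemma~\ref{lem:max}; the sets $P(x)$ for distinct maximal solenoids are disjoint (Lemma~\ref{disjoint-P}), so the collapse is well defined, and every non-trivial fibre of $\pi$ is an asymptotic pair. A graph map without Li--Yorke pairs has zero topological sequence entropy by \cite{LOYZ17}, so by Ku\v{s}hnirenko \cite{K67} every invariant measure of $(Y,g)$ has discrete spectrum; since asymptotic fibres force $\Delta_G$ to have full measure in $R_\pi$, Lemma~\ref{lem:factor-maps} lifts this to all invariant measures of $(G,f)$ at once. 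The key missing idea in your argument is this monotone collapse to a Li--Yorke-free factor, which handles all (possibly uncountably many) solenoids simultaneously rather than one ergodic component at a time.
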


Dendrites are another important class of one-dimensional continua,
and we refer the reader to the textbook \cite{Nadler} for their basic properties.
Recall that a \textit{dendrite} is any locally connected continuum containing no simple closed curve.
Recently in \cite{Marzougi} the authors showed that
the M\"obius function is linearly disjoint from all monotone local dendrite maps and any dynamical system on a dendrite $X$ with zero topological entropy provided that
the set $End(X)$ of endpoints of this dendrite is closed with finitely many accumulation points.
So it is another candidate for the positive answer in Question~\ref{q:1}.
Unfortunately, we are only able to provide the following partial answer to the question.
\begin{thm}\label{thm:dendrite-finite}
	Let $X$ be a dendrite such that
	$End(X)$ is a closed set having finitely many accumulation points and
	let $f\colon X\to X$ be a continuous map with zero topological entropy.
	Then for any point $x\in X$,
	every invariant measure supported on $\overline{(\orb_f(x)},f)$ has discrete spectrum.
\end{thm}

In fact, we believe that the following question should have a positive answer,
but so far we do not have formal argument to prove it.
\begin{que}\label{que}
	Assume that $f\colon X\to X$ is a continuous map with zero topological entropy acting on a dendrite $X$ with $End(X)$ countable. Does every invariant measure of $(X, f)$ have discrete spectrum?
\end{que}

When it comes to dendrites with uncountably many endpoints,
the situation becomes different.
Recall that the \textit{Gehman dendrite} is the topologically unique dendrite whose set of all endpoints is homeomorphic to the Cantor set and whose branching points are all of order three (e.g., see \cite{Nadler}).
It is well known that, the one-sided full shift over $2$ symbols
can be embedded as a subsystem of a dynamical system on the Gehman dendrite (e.g., see \cite[Example~6]{Kocan}).
We can modify this construction to show the following interesting fact.

\begin{thm}\label{thm:Gehman-dendrite}
Every one-sided subshift $(V,\sigma)$
can be embedded to a dynamical system on the Gehman dendrite $(D,f)$
in such a way that each invariant measure of $(D,f)$ 
is a convex combination of trivial measure and some invariant measure of $(V,\sigma)$.
\end{thm}

From the above theorem, it is quite obvious that the Gehman dendrite can support zero topological entropy maps with invariant measure beyond the class of these with discrete spectrum.
Namely, for every ergodic invariant measure $\mu$ of finite entropy, there exist a map $f$ on Gehman dendrite $X$, such that the set of ergodic measures of $(C,f)$
consist of $\mu$ and trivial measure.
Furthermore, it is well known that if a dendrite has an uncountable set of endpoints then it contains a copy of the Gehman dendrite (e.g., see \cite[Corollary~2]{Nikiel}) and if $X\subset Y$ are dendrites, then there exists a standard retraction $\pi \colon Y\to X$ (so-called first point map). Then any continuous map $f\colon X\to X$ defines by $g=f\circ \pi$ a continuous map $g\colon Y\to Y$ with $g|_X=f$.
Therefore in Question~\ref{que} we cannot drop assumption that the set of endpoints is at most countable.

As we will see, Theorem \ref{thm:Gehman-dendrite} has the following consequence, which shows that the dynamics on some
one-dimensional continua, such as the Gehman dendrite, are sufficient to consider M\"obius
Disjointness Conjecture.
Recall, that 
the conjecture states that the M\"obius function $\mu(n)$ is linearly
disjoint from any dynamical system $(X,T)$ with zero topological entropy, that is,
\[\text{for each}\ x\in X\ \text{and any}\ \varphi\in C (X, \mathbb{C}),\ \ \lim_{N\to\infty}\frac{1}{N}\sum_{n=1}^N \mu (n)\varphi(T^n(x))=0.\]
where $C (X, \mathbb{C})$ denotes the set of all continuous $\mathbb{C}$-valued functions over $X$.

\begin{cor}\label{cor:factors}
	The M\"obius function is linearly
	disjoint from all dynamical systems with zero topological entropy
	if and only if
	it is so  for all surjective dynamical systems over the Gehman dendrite with zero topological entropy.
\end{cor}
This provides yet another evidence that one-dimensional dynamics is not much simpler than dynamics in full generality.

The paper is organized as follows. In Section 2 we explore maximal $\omega$-limit sets of topological graph maps with zero topological entropy and show that every invariant measure for map with zero topological entropy acting on a topological graph map has discrete spectrum.
 In Section 3 we characterize all
invariant measures of quasi-graph maps as convex combinations of finitely many invariant measures for some topological graph maps. Consequently, we show that every ergodic invariant measure of a
quasi-graph map is isomorphic to an ergodic invariant measure of some topological graph map. We also obtain an analog of Llibre-Misiurewicz result relating positive topological entropy
with existence of topological horseshoes.
Finally, in Section \ref{main} we study dynamics on dendrite maps and present proofs of Theorems~\ref{thm:dendrite-finite}
and \ref{thm:Gehman-dendrite} and Corollary~\ref{cor:factors}.

\section{Graph maps with zero topological entropy}

In this section, we discuss invariant measures and discrete spectrum for general dynamical systems and then
study the structure of $\omega$-limits of topological graph maps
and show that if a topological graph map has zero topological entropy
then every invariant measure has discrete spectrum.

Recall that a topological space $X$ is a \textit{continuum} if it is a compact, connected metric space.
An \textit{arc} is a continuum homeomorphic to the closed interval $[0,1]$. A \textit{topological graph}, or just \textit{graph} for short, is a continuum which is a union of finitely many arcs,
any two of which are either
disjoint or intersect in at most one common endpoint.
We say that a graph $S$ is an \emph{$n$-star with center $v\in S$}
if there is a continuous injection $\varphi\colon S\to \C$ such that $\varphi(v)=0$
and $\varphi(S)=\{r \exp(\frac{2 k \pi i}{n}): r\in[0,1],\ k=1,2,\dotsc,n\}$.

Let $X$ be a compact arcwise connected metric space and $v\in X$.
The \emph{valence} of $v$ in $X$, denoted by $val(v)$,
is the number
$$\sup\{n\in\N\colon\text{there exists an }n\text{-star with center }
v\text{ contained in }X\}.
$$
Note that the valence of $v$ may be $\infty$.
The point $v$ is called an \emph{endpoint} of $X$ if $val(x)=1$,
and a \emph{branching point} of $X$ if $val(x)\geq 3$.
The collections of all endpoints and branching points of $X$ are denoted by
$End(X)$ and $Br(X)$, respectively.

\subsection{Invariant measures and discrete spectrum}
Let $(X,f)$ be a dynamical system and $x\in X$.
The \textit{orbit of $x$}, denoted by $\orb_f(x)$, is the set $\{f^nx\colon n\geq 0\}$, and the \textit{$\omega$-limit set of $x$}, denoted by $\omega_f(x)$, is defined as $\bigcap_{n\ge 0} \overline{\{f^m x\colon  m\ge n\}}$. It is easy to check that $\omega_f(x)$ is closed and strongly $f$-invariant, i.e., $f (\omega_f(x))= \omega_f(x)$.

Let $(X, f)$ be a dynamical system.
The set of all \textit{Borel probability measures} over $X$ is denoted by $M(X)$, and $M_f(X)\subset M(X)$ denotes the set of all \textit{invariant} elements of $M(X)$.
The set of all \textit{ergodic} measures in $M_f(X)$ is denoted by $M_f^e(X)$.
It is well known that $M(X)$ endowed with weak-* topology is a compact metric space and that
$M_f(X)$ is its closed subset.
We say that $\mu\in M_f(X)$ has \textit{discrete spectrum}, if the linear span of the eigenfunctions of $U_f$ in $L^2_\mu(X)$ is dense in $L^2_\mu(X)$, where as usual $U_f$ denotes the \textit{Koopman operator}: $U_f(\varphi)=\varphi\circ f$ for every $\varphi\in L^2_\mu (X)$.
By a classical result by Ku\v{s}hnirenko, an invariant measure
has discrete spectrum if and only if it has zero measure-theoretic
sequence entropy \cite{K67}.
We refer the reader to  \cite{Downar} and \cite{W82} as standard monographs on ergodic theory and entropy.

\begin{rem}\label{rem:simpleds}
	Let $\mu$ be an invariant measure for a dynamical system $(X, f)$. Assume that $X=\bigcup_{n=1}^\infty X_n$, where each $X_n$
	is a Borel set and each $X_n\cap \text{supp} (\mu)$ is positively $f$-invariant (i.e., $f (X_n\cap \text{supp} (\mu))\subset X_n\cap \text{supp} (\mu)$; where as usual $\text{supp} (\mu)$ denotes the \textit{support} of the measure $\mu$). Then $\mu$ has discrete spectrum, provided that the normalized invariant measure of each measure $\mu|_{X_n}$ has discrete spectrum.
\end{rem}

It is shown in \cite{C07} that every invariant measure for
an interval map with zero topological entropy has zero measure-theoretic sequence entropy.
We shall generalize this result in next subsection to graph maps with
zero topological entropy, see Theorem~\ref{thm:graph-map-dis-spect}.
In \cite{LOYZ17}, we have shown that
every ergodic invariant measure on a graph map
with zero topological entropy has discrete spectrum.
The following example shows that  in general situation
it is not enough to ensure that every  invariant measure
has discrete spectrum.

\begin{rem}\label{torus}
	The Lebesgue measure is an invariant measure of
	the map $(x,y)\mapsto (x,y+x)$ on torus which does not have
	discrete spectrum (see, e.g., \cite{A51} or \cite{K67}), while all ergodic invariant measures come from rotations of the circle, therefore all of them have discrete spectrum.
\end{rem}

It is well known that if a dynamical system has only countably many
ergodic invariant measures and each of them has discrete spectrum
then any invariant measure also has discrete spectrum.
This statement can be generalized slightly.

\begin{thm}\label{thm:discrete}
	Let $(X,f)$ be a dynamical system and $\mu\in M_f(X)$.
	Let $\mu=\int_{M_f^e(X)} \nu d\rho(\nu) $ be the ergodic decomposition of $\mu$.
	Assume that there are at most countably many non-atomic ergodic invariant measures in the decomposition (i.e., we can rewrite $\mu$ as $\mu=\int_{Q} \nu d\rho(\nu)$ with set $Q\subset M_f^e(X)$ containing at most countably many non-atomic measures) and each of them has discrete spectrum. Then $\mu$ also has discrete spectrum.
\end{thm}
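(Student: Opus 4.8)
The plan is to reorganise the ergodic decomposition so that the at most countably many non-atomic ergodic components are peeled off one by one, while \emph{all} of the atomic mass is pooled into a single measure $\mu_{\mathrm{at}}$ which one then shows is carried by the set of periodic points, where the Koopman operator is locally of finite order. Write $\mu=\int_Q\nu\,d\rho(\nu)$ as in the hypothesis, let $\nu_1,\nu_2,\dots$ be the non-atomic members of $Q$ with weights $d_i=\rho(\{\nu_i\})>0$, and put $\mu_{\mathrm{at}}=\mu-\sum_i d_i\nu_i$, so $\mu=\mu_{\mathrm{at}}+\sum_i d_i\nu_i$. Let $P=\{x\in X:f^q(x)=x\text{ for some }q\geq 1\}$, a Borel set. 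Two elementary facts do the work. First, an \emph{atomic} ergodic measure is the normalised counting measure on a single periodic orbit: if $\nu(\{x\})>0$ then $\nu(\{f^nx\})$ is nondecreasing in $n$ (since $\{f^nx\}\subset f^{-1}(\{f^{n+1}x\})$), forcing $x$ to be eventually periodic, and ergodicity then concentrates $\nu$ on the resulting periodic orbit; in particular such a $\nu$ has discrete spectrum and $\nu(P)=1$. Second, a \emph{non-atomic} ergodic measure $\nu$ has $\nu(P)=0$: the set $PP=\bigcup_{n\geq 0}f^{-n}(P)$ of eventually periodic points satisfies $f^{-1}(PP)=PP$, so $\nu(PP)\in\{0,1\}$ by ergodicity, and $\nu(PP)=1$ would make the Cesàro averages $\frac1N\sum_{n<N}\delta_{f^nx}$ of $\nu$-a.e.\ $x$ converge to the counting measure on a periodic orbit, making $\nu$ atomic. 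Integrating the indicator of $P$ against $\mu=\int_Q\nu\,d\rho$ and comparing with $\mu=\mu_{\mathrm{at}}+\sum_i d_i\nu_i$ then gives $\mu_{\mathrm{at}}(X\setminus P)=0$, i.e.\ $\mu_{\mathrm{at}}$ is concentrated on $P$.

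Next I would invoke Remark~\ref{rem:simpleds}. For an ergodic $\nu$ set $G_\nu=\{x:\frac1N\sum_{n<N}\delta_{f^nx}\to\nu\text{ weakly}\}$; these sets are Borel, pairwise disjoint for distinct $\nu$, have $\nu(G_\nu)=1$ by Birkhoff's theorem, and are fully invariant, $f^{-1}(G_\nu)=G_\nu$ (deleting or prepending one term changes neither the limit nor its existence). Put $X_1=X\setminus\bigcup_i G_{\nu_i}$ (which contains $P$) and $X_{i+1}=G_{\nu_i}$. Then $X=\bigcup_{n\geq1}X_n$, each $X_n$ is Borel, and each $X_n\cap\supp\mu$ is positively $f$-invariant: $\supp\mu$ is forward invariant, and for $X_1$ one uses that $fx\in G_{\nu_i}$ implies $x\in G_{\nu_i}$. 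Moreover $\mu$ restricted to $G_{\nu_i}$ and normalised equals $\nu_i$ (using $\mu_{\mathrm{at}}(G_{\nu_i})=0$, as $P\cap G_{\nu_i}=\emptyset$, and $\nu_j(G_{\nu_i})=0$ for $j\neq i$), which has discrete spectrum by hypothesis, while $\mu|_{X_1}=\mu_{\mathrm{at}}$. By Remark~\ref{rem:simpleds} it remains only to show that $\mu_{\mathrm{at}}$ has discrete spectrum, and we may assume $\mu_{\mathrm{at}}\neq 0$.

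For this last step I would work directly in $L^2_{\mu_{\mathrm{at}}}$. Write $P=\bigsqcup_{q\geq1}Q_q$, where $Q_q$ is the Borel, forward-invariant set of points of \emph{exact} period $q$; then $f^q=\mathrm{id}$ on $Q_q$, and $f^{-1}(Q_q)\setminus Q_q$ consists only of non-periodic (pre-periodic) points, since any periodic point whose image has exact period $q$ lies itself in $Q_q$. Hence $f^{-1}(Q_q)\setminus Q_q\subset X\setminus P$ is $\mu_{\mathrm{at}}$-null, so $f^{-1}(Q_q)=Q_q$ modulo $\mu_{\mathrm{at}}$, and since $\mu_{\mathrm{at}}(P)=\mu_{\mathrm{at}}(X)$ we get an orthogonal, $U_f$-invariant decomposition $L^2_{\mu_{\mathrm{at}}}=\bigoplus_{q\geq1}L^2(Q_q,\mu_{\mathrm{at}})$. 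On the $q$-th summand $U_f$ is an isometry with $U_f^q=I$, hence a unitary of order dividing $q$; the operators $\frac1q\sum_{k=0}^{q-1}\bar\zeta^{k}U_f^k$, over $q$-th roots of unity $\zeta$, are mutually orthogonal projections onto the eigenspaces $\ker(U_f-\zeta I)$ summing to $I$, so this summand is spanned by eigenfunctions of $U_f$. Therefore eigenfunctions of $U_f$ span a dense subspace of $L^2_{\mu_{\mathrm{at}}}$, $\mu_{\mathrm{at}}$ has discrete spectrum, and the proof is complete.

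I expect the main obstacle to be conceptual rather than computational: the decomposition of $\mu$ may contain uncountably many atomic ergodic measures (for instance when $f$ fixes an arc pointwise), so the classical ``countably many ergodic components with discrete spectrum'' principle cannot be applied to them directly. The device that resolves this is the split above — treat the at most countably many non-atomic components individually through their generic sets, and dispose of \emph{all} the atomic mass at once by noting that it is carried by $P$ and slicing $P$ by exact period into countably many pieces on which $f$ has finite order. A secondary point needing care is that $f^{-1}(P)$ is in general strictly larger than $P$, so the identities for $Q_q$ and for $P$ must be read modulo $\mu_{\mathrm{at}}$ rather than as genuine set identities.
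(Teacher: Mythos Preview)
Your proof is correct and follows essentially the same strategy as the paper: both arguments split $X$ into the sets $Q_q$ of points of exact period $q$ (where $U_f^q=\mathrm{id}$, yielding discrete spectrum by the roots-of-unity projection argument) and a complementary non-periodic part handled via the generic sets of the at most countably many non-atomic ergodic components, then combine using Remark~\ref{rem:simpleds}. The only difference is cosmetic---the paper peels off the period-$q$ sets first and then decomposes the remaining $X_\infty$ by ergodic measure, whereas you peel off the non-atomic components first and then decompose the residual $\mu_{\mathrm{at}}$ by period---and you are more explicit about why the sets $Q_q$ are $f$-invariant only modulo the measure.
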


\begin{proof}
Denote $X_n=\{ x\in X : f^n(x)=x \text{ and }f^i(x)\neq x \text{ for all }i=1,2,\ldots,n-1\}$ for each $n\in \mathbb{N}$ and $X_\infty=X\setminus \bigcup_{n=1}^\infty X_n$.
Clearly sets $X_n, n\in \mathbb{N}\cup \{\infty\}$ are pairwise disjoint $f$-invariant Borel subsets and then (the formula makes sense, no matter if $\mu|_{X_n}, n\in \mathbb{N}\cup \{\infty\}$ is trivial or not)
$$
L^2(\mu)=\bigoplus_{n\in \mathbb{N}} L^2(\mu|_{X_n})\oplus L^2(\mu|_{X_\infty}).
$$

Now applying Remark~\ref{rem:simpleds} it is enough to show that after restriction to any of these sets $\mu$ has discrete spectrum.
For each $n\in \mathbb{N}$, since $f^n|_{X_n}$ is the identity, $U_f^n$ is the identity as well on $L^2(\mu|_{X_n})$, and then the spectrum of $U_f$ restricted onto $X_n$ consists of exactly $n$-th roots of $1$, thus the normalized invariant measure of $\mu|_{X_n}$ has discrete spectrum.

But by the assumption $X_\infty$ has positive measure for at most countably many ergodic invariant measures from $Q$, all of which are pairwise singular, so similarly we have
$$
L^2(\mu|_{X_\infty})=\bigoplus_{\nu\in Q'} L^2(\mu|_{X_\nu}),
$$
where $Q'$ denotes the countable set of all non-atomic ergodic invariant measures of $(X, f)$ from $Q$ and for each $\nu\in Q'$ the set $X_\nu$ is a Borel set such that $\nu(X_\nu)=1$ and $\eta(X_\nu)=0$ for all $\eta\in M^e_\infty(f)\setminus \{\nu\}$,
e.g., we can take $X_\nu$ to be the set of all generic points of $\nu$.
Combining again Remark~\ref{rem:simpleds} with the assumption one has readily that the normalized invariant measure of $\mu|_{X_\infty}$ has discrete spectrum. This finishes the proof.	
\end{proof}

\begin{figure}
	\begin{subfigure}{0.49\textwidth}
		\begin{center}
			\includegraphics[width=0.7\textwidth]{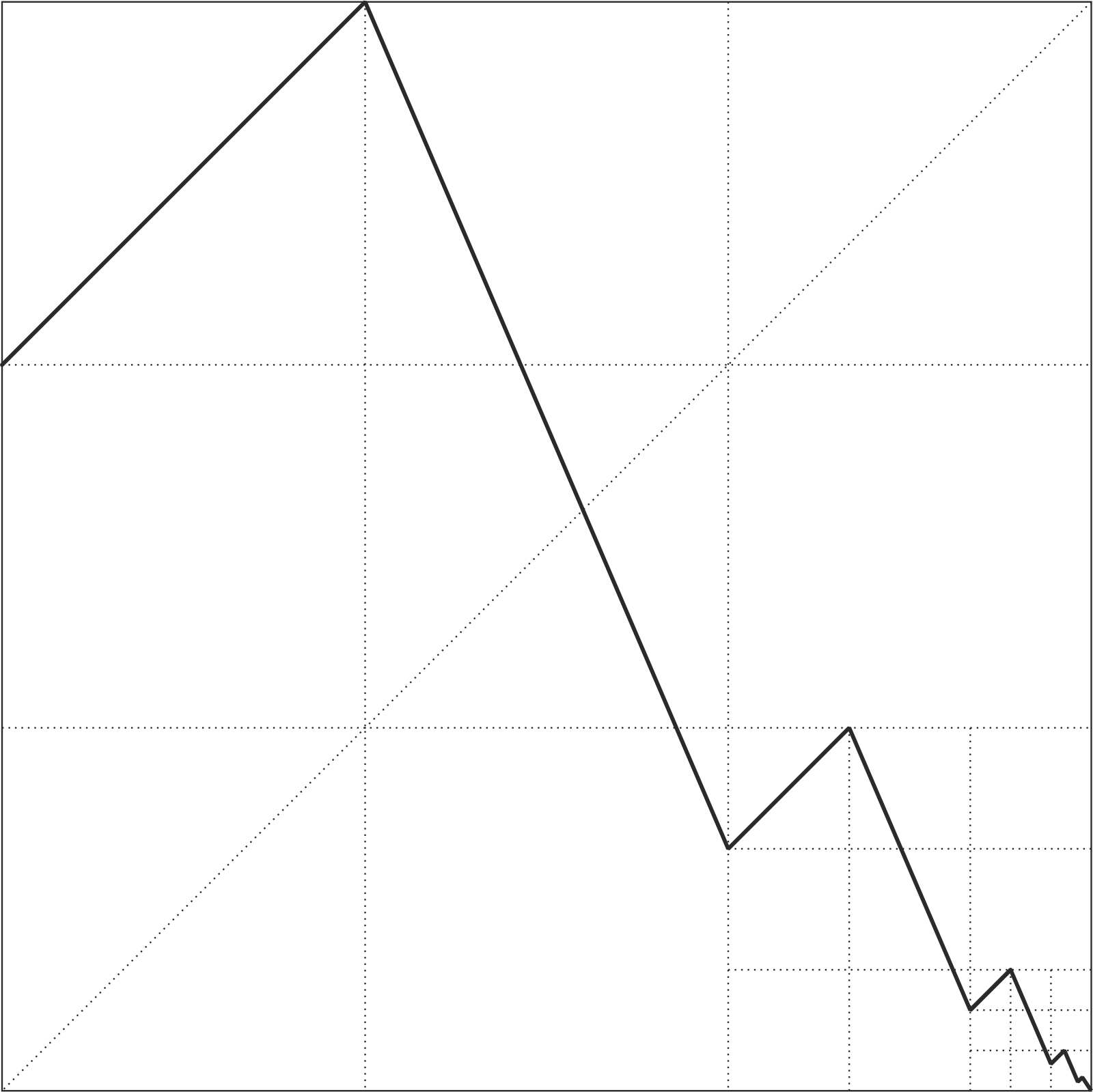}
			\caption{Classical Delahaye's example }\label{fig:mapf}
		\end{center}
	\end{subfigure}
	\begin{subfigure}{0.49\textwidth}
		\begin{center}
			\includegraphics[width=0.7\textwidth]{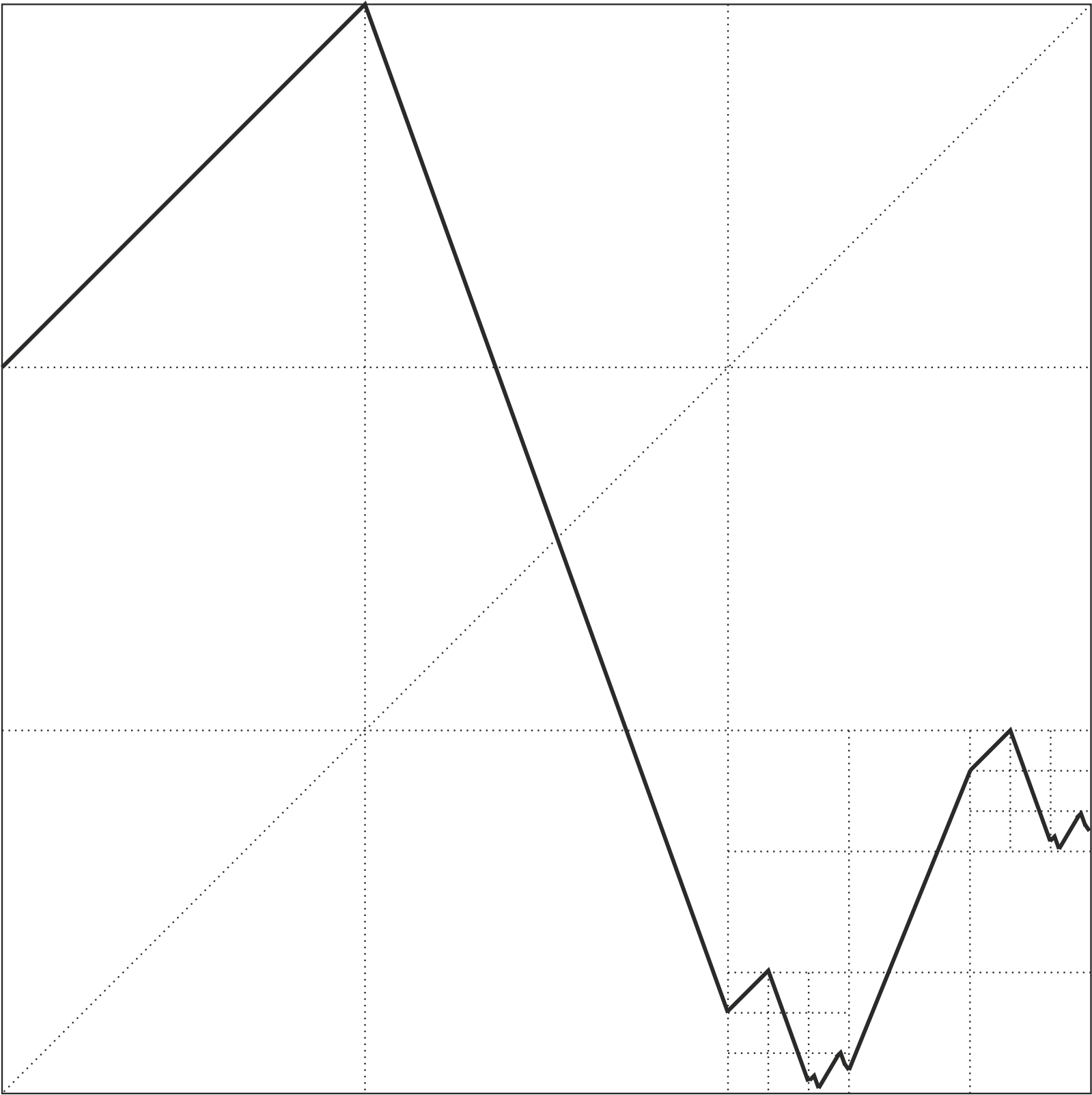}
			\caption{Modification of (a) to obtain uncountably many odometers}\label{fig:mapf2}
		\end{center}
	\end{subfigure}
	\caption{\small Zero topological entropy maps with odometers as maximal $\omega$-limit sets}\label{fig:uncountable}
\end{figure}

The following example reveals that there exists an interval map with
zero topological entropy which has uncountably many non-atomic ergodic measures.
So we cannot apply Theorem~\ref{thm:discrete} directly.

\begin{exam}\label{ex:24}
	Figure~\ref{fig:mapf} presents an example of Delahaye from \cite{Dela} with odometer as a maximal $\omega$-limit set.
	Its simple modification Figure~\ref{fig:mapf2} leads to a map with uncountably many maximal $\omega$-limit sets which are odometers, and consequently,
	uncountably many non-atomic ergodic measures. The idea here is very simple. In original Delahaye's example, the interval $[0,1/3]$ is $2$-periodic with image $[2/3,1]$
	and over $[2/3,1]$ the same scheme is repeated for $f^2$. In modified example, again $[0,1/3]$ is periodic, but now it contains two $2$-periodic subintervals with disjoint orbits,
	namely $[2/3,7/9]$ and $[8/9,1]$. Since at each step of construction $2^n$-periodic interval splits into two periodic intervals with disjoint orbits, at the end we will have uncountably many invariant sets, which by the construction are odometers.
\end{exam}

Let $(X,f)$ be a dynamical system.
Recall that a pair $(x,y)$ is \textit{proximal} if
\[
	\liminf_{n\to\infty}d(f^n(x),f^n(y))=0,
\]
\textit{asymptotic} if
\[
	\lim_{n\to\infty}d(f^n(x),f^n(y))=0,
\]
and a \textit{Li-Yorke pair}  (or \emph{scrambled pair})
if it is proximal but not asymptotic.
Given two dynamical systems $(X,f)$ and $(Y, g)$, by a \textit{factor map} we mean $\pi\colon (X, f)\rightarrow (Y, g)$ where $\pi\colon X\rightarrow Y$ is a continuous surjection satisfying $\pi\circ f= g\circ \pi$; in this case, $(X, f)$ is called an \textit{extension} of $(Y, g)$ and $(Y, g)$ is called a \textit{factor} of $(X, f)$.

We will use the following result which is essentially contained in the proof of
\cite[Theorem 3.8]{LTY15}
(see also proofs of \cite[Proposition 2.5]{DG16} and
\cite[Theorem 4.4]{HLSY03}).

\begin{lem}\label{lem:factor-maps}
	Let $\pi\colon (X,f)\to (Y,g)$ be a factor map, and set $\Delta_X= \{(x, x): x\in X\}$ and $R_\pi=\{(x_1,x_2)\in X^2\colon \pi(x_1)=\pi(x_2)\}$.
	Assume that $\Delta_X$ has full measure for each invariant measure of $(R_\pi,f\times f)$.
	Then every invariant measure of $(X,f)$
	is measure-theoretically isomorphic to some invariant measure of $(Y,g)$.
\end{lem}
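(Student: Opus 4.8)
The plan is to show that for each $\mu\in M_f(X)$ the factor map $\pi$ itself is a measure-theoretic isomorphism between $(X,\mu,f)$ and $(Y,\pi_*\mu,g)$; the hypothesis on $R_\pi$ is exactly what prevents $\mu$ from having any nontrivial fibre structure over its image. So fix $\mu\in M_f(X)$ and set $\nu=\pi_*\mu\in M_g(Y)$. Since $X$ is a compact metric space, the disintegration theorem gives a Borel family $(\mu_y)_{y\in Y}$ of probability measures on $X$ with $\mu_y\big(\pi^{-1}(y)\big)=1$ for $\nu$-a.e.\ $y$ and $\mu=\int_Y\mu_y\,d\nu(y)$. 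Form the relatively independent self-joining of $\mu$ over $\nu$,
\[
\lambda:=\int_Y\mu_y\times\mu_y\,d\nu(y)\ \in\ M(X\times X).
\]
Each $\mu_y\times\mu_y$ is carried by $\pi^{-1}(y)\times\pi^{-1}(y)\subseteq R_\pi$ and $R_\pi$ is closed, so $\lambda(R_\pi)=1$; and since $\nu=\pi_*\mu$ while $\mu$ is $f$-invariant, $\lambda$ is $(f\times f)$-invariant (the relatively independent joining over a common factor is an invariant joining; when $f$, $g$ are not invertible one first lifts $\pi\colon(X,f)\to(Y,g)$ to natural extensions and runs the argument there, as in the proofs of \cite[Theorem 3.8]{LTY15} and \cite[Proposition 2.5]{DG16}). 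As $(f\times f)(R_\pi)\subseteq R_\pi$, $\lambda$ is an invariant measure of $(R_\pi,f\times f)$, so the hypothesis gives $\lambda(\Delta_X)=1$.

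Now unwind this. Using the disintegration of $\lambda$, $\int_Y(\mu_y\times\mu_y)(\Delta_X)\,d\nu(y)=1$, so $(\mu_y\times\mu_y)(\Delta_X)=1$ for $\nu$-a.e.\ $y$; by Fubini this means $\mu_y(\{x\})=1$ for $\mu_y$-a.e.\ $x$, i.e.\ $\mu_y=\delta_{s(y)}$ for some $s(y)\in\pi^{-1}(y)$, for $\nu$-a.e.\ $y$. Choosing $s\colon Y\to X$ Borel, we get $\pi\circ s=\mathrm{id}_Y$ $\nu$-a.e., and $\mu=\int_Y\delta_{s(y)}\,d\nu(y)$ gives $s_*\nu=\mu$; together with $\pi_*\mu=\nu$ this forces $s\circ\pi=\mathrm{id}_X$ $\mu$-a.e.\ as well. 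Hence $\pi$ and $s$ are mutually inverse modulo null sets and measure preserving, so $\pi$ is an isomorphism of the standard Borel probability spaces $(X,\mu)$ and $(Y,\nu)$; since moreover $\pi\circ f=g\circ\pi$, it is a measure-theoretic isomorphism of $(X,\mu,f)$ with $(Y,\nu,g)$, and $\nu\in M_g(Y)$, as required.

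The only genuinely nontrivial point is producing the invariant measure $\lambda$ on $R_\pi$ that still ``sees'' the conditional measures $\mu_y$: the relatively independent joining is the natural choice, but its $(f\times f)$-invariance is transparent only in the invertible case, and in general one must lift the diagram to natural extensions, argue there, and project back down --- this is the part that is ``essentially contained'' in the cited proofs. Everything afterwards (forcing the fibre measures to be point masses, and assembling the isomorphism from $\pi$ and $s$) is routine measure theory on standard Borel spaces.
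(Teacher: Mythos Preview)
Your approach via the relatively independent self-joining is indeed the one underlying the cited references, and the argument is correct whenever $g$ is invertible (then $f^{-1}\pi^{-1}\mathcal{B}_Y=\pi^{-1}g^{-1}\mathcal{B}_Y=\pi^{-1}\mathcal{B}_Y$, so $U_f$ commutes with the projection onto $L^2(\pi^{-1}\mathcal{B}_Y)$ and $\lambda$ is $(f\times f)$-invariant). But in the genuinely non-invertible case there is a real gap, and in fact your stronger conclusion that $\pi$ \emph{itself} is the isomorphism is false.

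Take $X=Y=\{0,1\}^{\mathbb N}$, $f=g=\sigma$ the one-sided shift, and $\pi=\sigma$. Then $R_\pi\setminus\Delta_X$ consists of pairs $(x,x')$ that agree from coordinate~$1$ on, so $(\sigma\times\sigma)(R_\pi)\subseteq\Delta_X$ and the hypothesis of the lemma holds. For $\mu$ the $(\tfrac12,\tfrac12)$-Bernoulli measure one has $\mu_y=\tfrac12\delta_{0y}+\tfrac12\delta_{1y}$, hence $\lambda(\Delta_X)=\tfrac12$, while a direct computation gives $(\sigma\times\sigma)_*\lambda=$ the diagonal joining; thus $\lambda$ is \emph{not} invariant. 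Moreover $\pi=\sigma$ is $2$-to-$1$ everywhere, so $\pi$ is not a measure-theoretic isomorphism (the lemma's conclusion still holds in this example, but via the identity map, not via $\pi$).

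Your proposed fix---lift to natural extensions, argue there, project back---does not repair this. Lifting gives an invariant $\hat\lambda$ on $R_{\hat\pi}$, and its projection $\lambda_0=(p_0\times p_0)_*\hat\lambda$ is an invariant measure on $R_\pi$; the hypothesis then yields $\lambda_0(\Delta_X)=1$ and, by invertibility upstairs, $\hat\lambda(\Delta_{\hat X})=1$, so $\hat\pi$ is an isomorphism of the natural extensions. But $\lambda_0\neq\lambda$ in general, so you cannot conclude that the $\mu_y$ are Dirac, and an isomorphism of natural extensions does not by itself force $\pi$ to be one (as the example shows). The passage from ``$\hat\pi$ is an isomorphism'' back to a statement about $(X,\mu,f)$ and some invariant measure on $(Y,g)$ needs a separate argument that your sketch does not supply.
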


\subsection{The structure of $\omega$-limits of graph maps with zero topological entropy}
In this subsection we fix a topological graph $G$ and
a continuous map $f\colon G\to G$ with zero topological entropy.

A subgraph $K$ of $G$ is called \textit{periodic}
if there is a positive integer $k$ such that $K$,
$f(K)$, $\dotsc$, $f^{k-1}(K)$
are pairwise disjoint and $f^k(K) = K $. In such a case,
$k$ is called the period of $K$ and $\bigcup_{i=0}^{k-1}f^i(K)$
is called  a \emph{cycle of graphs}.
For an infinite $\omega$-limit set $\omega_f(x)$ we let
$$
C(x)=\{ X \colon X\subset G \text{ is a cycle of graphs and }\omega_f(x)\subset X\}.
$$
By \cite[Lemma1]{Blokh1} (see also \cite[Lemma~10~(i)]{RS14}) the family $C(x)$ is never empty.
If periods of cycles of graphs in $C(x)$ are unbounded then $\omega_f(x)$ is called \textit{solenoid} (\textit{for $(G, f)$}).
Below we recall \cite[Theorem~1]{Blokh1} which is an effective tool when dealing with solenoids (see also \cite[Lemma~11]{RS14}).

\begin{lem}\label{lem:solenoid}
	Let $f\colon G\to G$ be a graph map with zero topological entropy.
	Let $\omega_f (x)$ be a solenoid. Then there exists a sequence of cycles of graphs $(X_n)_{n\geq 1}$ with strictly increasing periods $k_n$ such that
\begin{enumerate}
	\item for every $n\ge 1$, $k_{n+1}$ is a multiple of $k_n$;
	\item for every $n\geq 1$, $X_{n+1}\subset X_n$;
	\item for every $n\ge 1$, each connected component of $X_n$
	contains the same number of connected components of $X_{n+1}$; and
	\item  $\omega_f(x)\subset \bigcap_{n\ge 1} X_n$.
	\end{enumerate}
 Furthermore, $\omega_f(x)$ does not contain any periodic points.
\end{lem}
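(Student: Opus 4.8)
The plan is to analyze the nested structure of cycles of graphs furnished by the definition of a solenoid. First I would invoke \cite[Lemma~1]{Blokh1} to conclude that $C(x)\neq\emptyset$, so there is at least one cycle of graphs containing $\omega_f(x)$; since $\omega_f(x)$ is a solenoid, by definition the periods of cycles in $C(x)$ are unbounded. I would then want to build the sequence $(X_n)_{n\ge1}$ inductively: having chosen $X_n$ a cycle of graphs with period $k_n$ containing $\omega_f(x)$, I pick a cycle $Y\in C(x)$ whose period is a proper multiple of $k_n$ (possible since periods are unbounded — actually one needs a little care, see below). The key structural point is that if two cycles of graphs both contain $\omega_f(x)$, then (after possibly intersecting them, or using minimality/connectedness arguments on the components) the one of larger period refines the one of smaller period, with the period being a multiple. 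This is essentially because each component of the smaller cycle is mapped around periodically and must contain the intersection with the larger cycle's components in a consistent way; the multiplicity claim (3) follows from the fact that $f^{k_n}$ permutes the components of $X_{n+1}$ lying inside a fixed component of $X_n$ transitively (as $\omega_f(x)$ meets each of them and $f$ acts on $\omega_f(x)$).

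The second half — that $\omega_f(x)$ contains no periodic point — I would prove by contradiction. Suppose $p\in\omega_f(x)$ is periodic with period $m$. Since $\omega_f(x)\subset X_n$ for all $n$ and $p$ lies in some single component $C_n$ of $X_n$, the orbit of $p$ visits exactly $m$ components of $X_n$; but the number of components of $X_n$ is $k_n\to\infty$, and since $f$ permutes these components cyclically (the cycle of graphs has a single $f^{k_n}$-invariant component up to the $\Z/k_n$ action) the orbit of $p$ among components has length exactly $k_n$ when $k_n$ divides... more carefully: the component orbit of any point of $X_n$ under $f$ has cardinality $k_n$ (if the point stays in $X_n$ forever, which $p$ does), forcing $m\ge k_n$ for all $n$, contradicting $k_n\to\infty$. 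Here I should be careful that $p$'s whole forward orbit stays in $X_n$; this holds because $\orb_f(p)\subset\omega_f(x)\subset X_n$ as $\omega_f(x)$ is strongly invariant.

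The main obstacle I anticipate is the refinement step: showing that cycles of graphs in $C(x)$ of larger period can be chosen nested inside those of smaller period with the divisibility and equal-multiplicity properties. Two arbitrary cycles containing $\omega_f(x)$ need not be nested as given, so one must replace them by suitable modifications — typically one takes connected components of intersections and uses that $\omega_f(x)$ is contained in each resulting piece, plus a pigeonhole/transitivity argument on how $f$ permutes components, to extract a genuine subcycle. Zero topological entropy is used implicitly throughout via Blokh's theory (a positive-entropy graph map need not have this clean cycle-of-graphs structure around solenoidal $\omega$-limit sets), so I would lean on \cite{Blokh1} and \cite{RS14} for the hard combinatorial-topological lemmas rather than reproving them. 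Once the nested sequence is in hand, properties (1)–(4) are essentially bookkeeping, and the no-periodic-points conclusion is the short argument above.
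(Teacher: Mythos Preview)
The paper does not prove this lemma at all: it is stated as a recollection of \cite[Theorem~1]{Blokh1} (see also \cite[Lemma~11]{RS14}), with no argument given. Your sketch is therefore not competing with any proof in the paper; you are supplying what the authors chose simply to cite. The outline you give --- extracting cycles of graphs of unbounded period from $C(x)$, refining them into a nested sequence via intersections of components, and deriving the divisibility and equal-multiplicity conditions from the way $f$ permutes components --- is precisely the content of the cited results, and your no-periodic-points argument (the orbit of a period-$m$ point would have to traverse all $k_n$ components of $X_n$, forcing $k_n\mid m$ for all $n$) is the standard one.

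One minor correction: zero topological entropy is a standing hypothesis in the subsection but is not actually needed for this particular lemma. The solenoid hypothesis (periods in $C(x)$ unbounded) already drives the nested-cycle structure; entropy enters elsewhere in Blokh's classification, when one wants to say which $\omega$-limit sets \emph{are} solenoids. So you can drop the remark that zero entropy is ``used implicitly throughout'' here. Otherwise your plan is sound and, if anything, more detailed than what the paper offers.
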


Note that the set of all $\omega$-limit sets of a graph map is closed under Hausdorff metric~\cite{MS07}, and then each $\omega$-limit set is contained in
a maximal one by Zorn Lemma.
The following three lemmas are implicitly contained in~\cite{Blokh1},
here we provide proofs for completeness.

\begin{lem} \label{disjoint}
	Let $f\colon G\to G$ be a graph map with zero topological entropy.
Any two different solenoids which are maximal $\omega$-limit sets are disjoint.
\end{lem}
\begin{proof}
Assume that $\omega_f(x)$, $\omega_f(y)$ are two solenoids which are both maximal $\omega$-limit sets.
As $G$ has only finitely many branching points,
by Lemma~\ref{lem:solenoid} there exists a positive integer $n$
such that $k_n$ is larger than the number of branching points of $G$
and so there is also a connected  component $I$ of $X_n$ which does not contain any branching points of $G$.
Then $I$ is an arc in $G$ and $f^{k_n}(I)=I$.
It is clear that $\omega_f(x)\cap I$ is uncountable,
so $f^s(x)\in I$ for some $s>0$.
Then $\omega_{f^{k_n}}(f^s(x))\subset I$ is  a solenoid for $(I, f^{k_n})$.
Furthermore, it is not hard to see that $\omega_{f^{k_n}}(f^s(x))$ is a maximal $\omega$-limit set for $(I, f^{k_n})$. Namely, if $\omega_{f^{k_n}}(\xi)\supset \omega_{f^{k_n}}(f^s(x))$ is another $\omega$-limit set for $(I, f^{k_n})$, then $\omega_{f}(\xi)\supset \omega_{f}(x)$ is an $\omega$-limit set for $(G, f)$ and hence $\omega_{f}(\xi)= \omega_{f}(x)$, so finally one has $\omega_{f^{k_n}}(\xi)= \omega_{f^{k_n}}(f^s(x))$, since $\omega_{f^{k_n}}(\xi)= \omega_{f}(\xi)\cap I$ and $\omega_{f^{k_n}}(f^s(x))= \omega_{f}(x)\cap I$ by the above construction.
	
Now suppose that there is $z\in \omega_f(y)\cap \omega_f(x)$. By invariance of $\omega_f(y)\cap \omega_f(x)$ and its periodic structure provided by Lemma~\ref{lem:solenoid} we may assume that $z$ belongs to the interior of $I$. Similar to the above arguments, there is $r>0$ such that $f^r(y)\in I$,
$\omega_{f^k}(f^r(y))$ is a maximal $\omega$-limit set for $(I, f^{k_n})$ and $\omega_{f^{k_n}}(f^r(y))= \omega_{f}(y)\cap I$. In particular, $z$ belongs to the intersection of $\omega_{f^{k_n}}(f^s(x))$ and $\omega_{f^{k_n}}(f^r(y))$. But now we are on the interval $I$ and
clearly $(I, f^{k_n})$ has zero topological entropy, so by \cite[Lemma~3.4]{L11} one has $\omega_{f^{k_n}}(f^s(x))= \omega_{f^{k_n}}(f^r(y))$, which leads to $\omega_{f}(x)= \omega_{f}(y)$. This finishes the proof.
\end{proof}

\begin{lem}\label{lem:max}
	Let $f\colon G\to G$ be a graph map with zero topological entropy.
Let $\omega_f(x)$ be a solenoid, and assume $\omega_f(x)\subset \bigcap_{n\ge 1} X_n$ where $(X_n)_{n\geq 1}$ is a sequence of cycles of graphs with increasing periods provided by Lemma~\ref{lem:solenoid}.
Then:
\begin{enumerate}
  \item the set $P(x):=\bigcap_{n\ge 1} X_n$ is closed, strongly invariant and does not contain any periodic points;
  \item $P(x)$ has uncountably many
  connected components and at most countably many of them can be
  non-degenerate;
  \item each connected component of $P(x)$ intersects $\omega_f(x)$;
  \item\label{lem:max:4} if $J$ is a non-degenerated connected component of $P(x)$,
  then 
  $J$ is wandering, i.e., 
  $f^i(J)\cap J=\emptyset$ for all $i\geq 1$;
  \item $P(x)$ depends only on $\omega_f(x)$, not on the choice of $(X_n)_{n\geq 1}$;
  \item for any $y\in P(x)$,
  $\omega_f(y)\setminus\omega_f(x)$ is at most countable;
  \item for every $z\in G$, $\omega_f(z)\cap \omega_f(x)\neq\emptyset$ implies that $\omega_f(z)$ is also a solenoid contained in $P(x)$ and
    $\omega_f(z)\setminus\omega_f(x)$ is at most countable; and

    \item there exists a unique maximal $\omega$-limit set containing $\omega_f(x)$ and this maximal $\omega$-limit is a solenoid which is contained in $P(x)$.
\end{enumerate}
\end{lem}
\begin{proof}
(1)--(4) can be easily obtained from Lemma~\ref{lem:solenoid} and the construction of $P(x)$.

(5) Let $(Y_n)_{n\ge 1}$ be another sequence of cycles of graphs as in Lemma~\ref{lem:solenoid}.
Note that $\bigcap_{n\ge 1} Y_n$ also satisfies (1)--(4).
In particular, $\bigcap_{n\ge 1} Y_n$ has uncountably many degenerate
components and each of them is contained in $\omega_f(x)$.
For every sufficiently large integer $m$
there is a connected component $J_m$ of $X_m$ which
does not contain branching points of $G$ and such that $J_m$ contains at least three degenerate components of $\bigcap_{n\ge 1} Y_n$.
But then by the assumption there exists a positive integer $s$ and a connected component $I_s$ of $Y_s$ such that $I_s\subset J_m$, which implies that $Y_s\subset X_m$ and hence $\bigcap_{n\ge 1} Y_n\subset \bigcap_{n\ge 1} X_n$. Finally by duality we obtain $\bigcap_{n\ge 1} Y_n= \bigcap_{n\ge 1} X_n$.
	
(6) Let $y\in P(x)$. Clearly $\omega_f(y)\subset P(x)$. By (1), $\omega_f(y)$ does not contain any periodic points, therefore $\omega_f(y)$ is uncountable.
By (4), for every non-degenerated connected component $J$ of $P(x)$,
$J\cap \omega_f(y)$ is contained in the boundary of $J$ and then is finite. So there are at most countable many points in $\omega_f(y)$
which are in the non-degenerated connected components $J$ of $P(x)$. By (3), other points in $\omega_f(y)$ must be contained in $ \omega_f(x)$. Then $\omega_f(y)\setminus\omega_f(x)$ is at most countable.

(7) First note that $\omega_f(z)\cap \omega_f(x)$ is uncountable.
For any positive integer $n$,
as $\omega_f(x)\subset X_n$ and $X_n$ has only finite boundary points,
there exists a positive integer $i$ such that $f^i(z)\in X_n$
and then $\omega_f(z)\subset X_n$.
Then $\omega_f(z)$ is also a solenoid and $\omega_f(z)\subset P(x)$.
Similar to the proof of (6), $\omega_f(z)\setminus\omega_f(x)$ is at most countable.

(8) Let $\omega_f(y)$ be a maximal $\omega$-limit set containing $\omega_f(x)$.
In particular, $\omega_f(y)\cap \omega_f(x)\neq\emptyset$, and hence $\omega_f(y)$ must be a solenoid contained in $P(x)$ by (7).
If $\omega_f(z)$ is another maximal $\omega$-limit set containing $\omega_f(x)$, then clearly $\omega_f(y)\cap\omega_f(z)\supset \omega_f(x)\neq\emptyset$.
Then $\omega_f(y)=\omega_f(z)$ follows from Lemma~\ref{disjoint}.
\end{proof}

\begin{lem} \label{disjoint-P}
	Let $f\colon G\to G$ be a graph map with zero topological entropy.
	Let $\omega_f(x)$ and $\omega_f(y)$ be two different solenoids which are maximal
	$\omega$-limit sets and
	assume that $P(x)$ and $P(y)$ are as in Lemma~\ref{lem:max}.
	Then $P(x)$ and $P(y)$ are disjoint.
\end{lem}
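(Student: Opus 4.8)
The plan is to argue by contradiction, exploiting the strong invariance of the sets $P(x)$ and $P(y)$, items (6) and (7) of Lemma~\ref{lem:max}, and the disjointness of distinct maximal solenoids from Lemma~\ref{disjoint}. Suppose there were a point $z\in P(x)\cap P(y)$. Since both $P(x)$ and $P(y)$ are closed and strongly $f$-invariant by Lemma~\ref{lem:max}(1), the forward orbit $\orb_f(z)$ and hence its closure lie in $P(x)\cap P(y)$, so $\omega_f(z)\subseteq P(x)\cap P(y)$.

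Next I would observe that $\omega_f(z)$ is uncountable. It is nonempty because $G$ is compact, and it contains no periodic point because $P(x)$ contains none by Lemma~\ref{lem:max}(1); as a finite $\omega$-limit set is necessarily a periodic orbit (its points being permuted by $f$), $\omega_f(z)$ must be infinite. Applying Lemma~\ref{lem:max}(6) to the point $z\in P(x)$ shows that $\omega_f(z)\setminus\omega_f(x)$ is at most countable, so $\omega_f(z)\cap\omega_f(x)\neq\emptyset$; then Lemma~\ref{lem:max}(7) tells us $\omega_f(z)$ is itself a solenoid for $(G,f)$, and applying items (2)--(3) of Lemma~\ref{lem:max} to the solenoid $\omega_f(z)$ (its associated set $P(z)$ has uncountably many connected components, all but at most countably many of them degenerate, and each component meets $\omega_f(z)$) we conclude that $\omega_f(z)$ is uncountable.

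Now I would run the counting argument. By Lemma~\ref{lem:max}(6), applied once to $P(x)$ with the point $z\in P(x)$ and once to $P(y)$ with the point $z\in P(y)$, both $\omega_f(z)\setminus\omega_f(x)$ and $\omega_f(z)\setminus\omega_f(y)$ are at most countable. Hence
$$
\omega_f(z)\cap\omega_f(x)\cap\omega_f(y)=\omega_f(z)\setminus\bigl((\omega_f(z)\setminus\omega_f(x))\cup(\omega_f(z)\setminus\omega_f(y))\bigr)
$$
is obtained from the uncountable set $\omega_f(z)$ by removing an at most countable set, so it is uncountable and in particular nonempty. Thus $\omega_f(x)\cap\omega_f(y)\neq\emptyset$, which contradicts Lemma~\ref{disjoint} since $\omega_f(x)$ and $\omega_f(y)$ are two different solenoids for maximal $\omega$-limit sets. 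Therefore no such $z$ exists and $P(x)\cap P(y)=\emptyset$.

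I expect the only point requiring a little care to be the justification that $\omega_f(z)$ is uncountable; this is precisely where Lemma~\ref{lem:max}(7), together with items (2)--(3) (or, equivalently, the explicit nested solenoid structure of Lemma~\ref{lem:solenoid}), is needed. Everything else is routine bookkeeping with the already-established parts of Lemma~\ref{lem:max} and the disjointness of distinct maximal solenoids from Lemma~\ref{disjoint}.
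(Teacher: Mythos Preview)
Your overall approach is exactly the paper's: pick $z\in P(x)\cap P(y)$, use Lemma~\ref{lem:max}(6) twice to see that both $\omega_f(z)\setminus\omega_f(x)$ and $\omega_f(z)\setminus\omega_f(y)$ are at most countable, deduce $\omega_f(x)\cap\omega_f(y)\neq\emptyset$ from uncountability of $\omega_f(z)$, and finish with Lemma~\ref{disjoint}.

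There is one small logical slip in your uncountability argument. You establish only that $\omega_f(z)$ is \emph{infinite} (since it has no periodic points and a finite $\omega$-limit set is a periodic orbit), and then write that Lemma~\ref{lem:max}(6) gives $\omega_f(z)\setminus\omega_f(x)$ at most countable, ``so $\omega_f(z)\cap\omega_f(x)\neq\emptyset$''. That inference fails if $\omega_f(z)$ were merely countably infinite. You then invoke (7) to get that $\omega_f(z)$ is a solenoid and hence uncountable, but (7) already requires $\omega_f(z)\cap\omega_f(x)\neq\emptyset$ as a hypothesis, so the argument is circular at this point.

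The fix is immediate and is what the paper (implicitly, in the proof of Lemma~\ref{lem:max}(6)) uses: an $\omega$-limit set with no periodic points is automatically uncountable, because any compact invariant set contains a minimal set, and a minimal set is either a periodic orbit or perfect (hence uncountable). Once you upgrade ``infinite'' to ``uncountable'' by this standard fact, the detour through (7) and (2)--(3) becomes unnecessary and the rest of your argument goes through verbatim.
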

\begin{proof}
If $P(x)\cap P(y)\neq\emptyset$, pick $z\in P(x)\cap P(y)$.
By Lemma~\ref{lem:max}, $\omega_f(z)$ is uncountable,
and both $\omega_f(z)\setminus\omega_f(x)$ and $\omega_f(z)\setminus\omega_f(y)$ are at most countable.
Then $\omega_f(x)\cap\omega_f(y)\neq\emptyset$, and hence $\omega_f(x)= \omega_f(y)$ by Lemma~\ref{disjoint}. A contradiction. Thus, $P(x)\cap P(y)=\emptyset$.
\end{proof}

Thus we can obtain the following result which will be used later.

\begin{prop}\label{prop:collapse}
	Let $f\colon G\to G$ be a graph map with zero topological entropy.
There exists a continuous map $g$ acting on a graph $Y$ without Li-Yorke pairs and a factor map $\pi \colon (G,f)\to (Y,g)$
such that the pair $(p,q)$ is asymptotic whenever
 $p,q\in \pi^{-1}(y)$ for some $y\in Y$.
\end{prop}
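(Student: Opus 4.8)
The plan is to collapse, inside $G$, every non-degenerate ``wandering cell'' carried by a solenoidal $\omega$-limit set, and then check that nothing in the quotient can be a Li--Yorke pair. First I would enumerate the maximal $\omega$-limit sets that are solenoids as $\{\omega_f(x_\lambda)\}_{\lambda\in\Lambda}$, set $P_\lambda:=P(x_\lambda)$ as in Lemma~\ref{lem:max}, note by Lemma~\ref{disjoint-P} that the $P_\lambda$ are pairwise disjoint, closed and strongly invariant, and let $\{J_i\}_{i\in I}$ be the family of all non-degenerate connected components of all the $P_\lambda$. By Lemma~\ref{lem:max}(2) and because these $J_i$ are pairwise disjoint subgraphs of $G$, after fixing a length metric on $G$ their lengths are summable, so $I$ is countable and $\diam(J_i)\to 0$. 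I would then take the relation $x\sim y$ iff $x=y$ or $x,y\in J_i$ for some $i$: it is closed (if $d(x_k,y_k)\ge\delta$ with $x_k\sim y_k$, then since only finitely many $J_i$ have diameter $\ge\delta$, infinitely many of the pairs $x_k,y_k$ lie in a single closed $J_i$) and $f$-compatible ($f(J_i)$ is a subcontinuum of $f(P_\lambda)=P_\lambda$, hence lies in one component of $P_\lambda$, which is either a point, forcing $f(x)=f(y)$, or some $J_{i'}$). Put $Y:=G/{\sim}$, let $\pi$ be the quotient map and $g$ the induced map; then $Y$ is a compact metric space, $g$ is continuous, $\pi\colon(G,f)\to(Y,g)$ is a factor map, and $Y$ is again a topological graph (collapsing countably many pairwise disjoint subgraphs of diameters tending to $0$; only the finitely many $J_i$ meeting $Br(G)$ can raise a valence, the others sitting inside single edges). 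Note that $R_\pi=\Delta_G\cup\bigcup_i(J_i\times J_i)$ and that the non-degenerate fibres of $\pi$ are exactly the $J_i$.

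To see that $\pi$ has asymptotic fibres it suffices to show $\diam(f^s(J_i))\to 0$. Writing $P_\lambda=\bigcap_n X_n$ as in Lemma~\ref{lem:solenoid}, a component $J_i$ of $P_\lambda$ sits in the component of $X_n$ with some index $a_n$, so $f^s(J_i)$ sits in the component of index $a_n+s$; since the periods $k_n\to\infty$, the integers embed into $\varprojlim\Z/k_n$, so for $s\ne t$ the sets $f^s(J_i)$ and $f^t(J_i)$ lie in distinct, hence disjoint, connected components of $P_\lambda$. Thus $\{f^s(J_i)\}_{s\ge 0}$ are pairwise disjoint subcontinua of $G$, their lengths are summable, and $\diam(f^s(J_i))\le\mathrm{length}(f^s(J_i))\to 0$; hence $d(f^sp,f^sq)\le\diam(f^s(J_i))\to 0$ for $p,q\in J_i$.

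For the absence of Li--Yorke pairs, suppose $(\bar p,\bar q)$ is one with $\bar p\ne\bar q$; lift to $p,q\in G$ and set $\Omega:=\omega_{f\times f}((p,q))\subseteq\omega_f(p)\times\omega_f(q)$. Proximality gives $n_k\to\infty$ along which, after a subsequence, $f^{n_k}p\to a$ and $f^{n_k}q\to b$, so $(a,b)\in\Omega$ and $\pi(a)=\pi(b)$, i.e.\ $(a,b)\in R_\pi$. I would first produce a set $S$ with $a,b\in S$ and $\omega_f(p),\omega_f(q)\subseteq S$, where $S$ is either a periodic orbit $O$ (of some period $m$) or one of the $P_\lambda$. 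Using that a finite $\omega$-limit set is a single periodic orbit (being internally chain transitive), and that for a zero-entropy graph map every infinite $\omega$-limit set is a solenoid (see \cite{BlokhSurv}): if $a,b$ lie in a common component $J$ of some $P_\lambda$, then $a\in\omega_f(p)\cap P_\lambda$ is non-periodic by Lemma~\ref{lem:max}(1), so $\omega_f(p)$ is infinite, hence a solenoid, hence by Lemma~\ref{lem:max}(8) contained in $P(p)$, and $P(p)=P_\lambda$ since $a\in P(p)\cap P_\lambda$ and the $P_\mu$ are pairwise disjoint; likewise $\omega_f(q)\subseteq P_\lambda$, so $S=P_\lambda$. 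If instead $a=b$ and $a$ is periodic, then $\omega_f(p)$ and $\omega_f(q)$ cannot be solenoids, so both are finite and equal to the periodic orbit $O$ of $a$, and $S=O$; and if $a=b$ with $a$ non-periodic, the previous case applies and gives $S=P_\mu$ for the unique $\mu$ with $a\in P_\mu$.

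Finally, in either case — $S=O$ with $\Sigma:=\Z/m$, or $S=P_\lambda$ with $\Sigma:=\varprojlim\Z/k_n$ the odometer — I would use the nested structure of Lemma~\ref{lem:solenoid} to get a continuous surjection $\psi\colon S\to\Sigma$ with $\psi(f(z))=\psi(z)+1$ whose fibres are single points, resp.\ the connected components of $P_\lambda$, so that $\psi^{-1}(e)\times\psi^{-1}(e)\subseteq R_\pi$ for every $e\in\Sigma$; and then extend $\psi$ to the basin $B:=\{z\in G:\omega_f(z)\subseteq S\}$, which contains $p$ and $q$, still with $\psi(f(z))=\psi(z)+1$, by the level-by-level rule that $f^nz$ eventually lands next to the component (resp.\ point) of index $n+\psi(z)$ (consistency across levels is exactly what Lemma~\ref{lem:solenoid} provides), with the property that $z_l\to z\in S$ with $z_l$ far enough out in the basin forces $\psi(z_l)\to\psi(z)$. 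Then for any $(u,v)\in\Omega$, writing $(u,v)=\lim_l(f^{n_l}p,f^{n_l}q)$, one gets $\psi(u)-\psi(v)=\lim_l\bigl((\psi(p)+n_l)-(\psi(q)+n_l)\bigr)=\psi(p)-\psi(q)$, a constant independent of $(u,v)$; taking $(u,v)=(a,b)$ and using $\psi(a)=\psi(b)$ (as $a,b$ lie in one fibre of $\psi|_S$) shows the constant is $0$, so $\psi(u)=\psi(v)$ and hence $(u,v)\in R_\pi$ for all $(u,v)\in\Omega$, i.e.\ $\Omega\subseteq R_\pi$. Consequently $\omega_{g\times g}((\bar p,\bar q))=(\pi\times\pi)(\Omega)\subseteq\Delta_Y$, so $(\bar p,\bar q)$ is asymptotic, contradicting that it is a Li--Yorke pair. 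I expect the main obstacle to be precisely this last step — isolating the odometer factor of a solenoid, extending it to the basin, and pinning down the level-wise continuity — whereas that $\sim$ is closed, that $Y$ is a graph, and that the fibres are asymptotic should be routine, modulo care with the geometry of the $P_\lambda$.
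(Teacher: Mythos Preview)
Your quotient construction, the verification that the relation is closed and $f$-invariant, that $Y$ is a graph, and that fibres are asymptotic, are all essentially the same as in the paper. The divergence is in the proof that $(Y,g)$ has no Li--Yorke pairs. The paper does not lift a hypothetical Li--Yorke pair and run an odometer argument; instead it invokes \cite[Theorem~3]{RS14}, which says that a zero-entropy graph map has a Li--Yorke pair precisely when some solenoid $\omega_g(y)$ has a $P(y)$ with a non-degenerate connected component. So the paper only needs to check that in $Y$ every $P(y)$ is totally disconnected, and this follows in one line: $\pi$ is monotone, so any solenoid $\omega_g(y)$ in $Y$ lifts to a solenoid $\omega_f(x)$ in $G$ with $\pi(P(x))=P(y)$, and by construction all non-degenerate components of $P(x)$ were collapsed. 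Your route---building the odometer factor $\psi$ on $P_\lambda$, extending it to the basin, and using $\psi(u)-\psi(v)=\psi(p)-\psi(q)$ to force $\Omega\subseteq R_\pi$---is correct and more self-contained (it effectively re-derives the relevant direction of the Ruette--Snoha result), but it is considerably longer and, as you note, the basin extension and its continuity at $S$ require real care. The paper's approach buys brevity by outsourcing exactly that work to \cite{RS14}; yours buys independence from that reference at the cost of reproducing its content.
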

\begin{proof}
For every maximal $\omega$-limit set which is a solenoid, denote by $P$ the unique set associated to it as in Lemma~\ref{lem:max}, by intersecting periodic graphs provided by Lemma~\ref{lem:solenoid}.
Note that there are at most countably many sets
$P$ which have non-degenerate connected components,
as we are working on a graph $G$ and sets $P$ are by Lemma~\ref{disjoint-P} either disjoint or equal.
Denote by $(P_n)_{n\in \Lambda}$ the sequence (finite or not) of these sets, where the index set $\Lambda$ is at most countable,
such that $P_n\cap P_m=\emptyset$ whenever $m\neq n$.
Let $R$ be the relation on $G\times G$ given by $x\sim y$ if and only if $x=y$ or there exists an $n\in \Lambda$ and a connected component $C$ of $P_n$ such that $x,y\in C$. In particular, if $\Lambda= \emptyset$ then $(x, y)\in R$ if and only if $x= y$. By definition $R$ is positively $f\times f$-invariant, i.e., $(f\times f) (R)\subset R$.
Since for every $\eps>0$ there are at most finitely many  disjoint connected subsets of diameter at least $\eps$, we immediately obtain that the subset $R$ is closed.
It is also clear that equivalence classes of $R$ are connected (and so arc-wise connected).
We have readily that the quotient space $Y=G/_R$ is a topological graph.
	
We are going to check that the induced maps $g=f/_R$ and $\pi \colon (G,f)\ni x \mapsto [x]_R\in (Y,g)$ have desired properties.
First, let $y\in Y$ and fix any $p,q\in \pi^{-1}(y)$. If $p\neq q$ then 
by the definition of $\pi$, points $p, q$ are contained in the same connected component of $P_n$ for some $n\in \Lambda$, so by Lemma~\ref{lem:max}\eqref{lem:max:4}
the pair $(p,q)$ is asymptotic.
Now it is sufficient to show that $(Y,g)$ contains no Li-Yorke pairs.
Repeating the argument from the proof of \cite[Theorem~3]{RS14},
we only need to check that for every solenoid $\omega_f(y)$ in $Y$,
$P(y)$ does not contain any non-degenerate connected components.
Pick a point $x\in G$ with $\pi(x)=y$.
Then $\pi(\omega_f(x))=\omega_f(y)$.
As $\pi$ is monotone (i.e., pre-images of connected sets are connected), $\omega_f(x)$ is also a solenoid and $\pi(P(x))=P(y)$.
By the construction of $Y$, we have  collapsed all the
non-degenerated connected components of $P(x)$,
so $P(y)$ does not contain any non-degenerate connected components. This completes the proof.
\end{proof}

Now we can prove the main result of this section.
\begin{thm}\label{thm:graph-map-dis-spect}
Let $f\colon G\to G$ be a graph map with zero topological entropy.
Then every invariant measure of $(G, f)$ has discrete spectrum.
\end{thm}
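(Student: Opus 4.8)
The plan is to reduce the statement, via Proposition~\ref{prop:collapse}, to the Li--Yorke-pair-free factor $(Y,g)$, and then to establish it for $(Y,g)$ by decomposing an arbitrary invariant measure over a countable family of canonical $g$-invariant Borel subsets of $Y$ and applying Remark~\ref{rem:simpleds}.

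First I would carry out the reduction. Let $\pi\colon(G,f)\to(Y,g)$ and the graph $Y$ be as in Proposition~\ref{prop:collapse}; then $g$ again has zero topological entropy, each fiber $\pi^{-1}(y)$ consists of asymptotic pairs, and $(Y,g)$ has no Li--Yorke pairs. With $R_\pi$ as in Lemma~\ref{lem:factor-maps}, every $(p,q)\in R_\pi$ is asymptotic, whereas an asymptotic pair which is recurrent for $f\times f$ must satisfy $p=q$ (pass to the limit in $d(f^np,f^nq)\to0$ along return times of $(p,q)$). By the Poincar\'e recurrence theorem every invariant measure of the compact system $(R_\pi,f\times f)$ gives full measure to its recurrent points, hence to $\Delta_G$; so Lemma~\ref{lem:factor-maps} applies and every invariant measure of $(G,f)$ is measure-theoretically isomorphic to an invariant measure of $(Y,g)$. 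Since discrete spectrum is preserved by measure-theoretic isomorphism, it suffices to treat $(Y,g)$.

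Now fix $\mu\in M_g(Y)$ and write $Y=\bigl(\bigcup_{n\ge1}Y_n\bigr)\cup\mathcal S\cup\mathcal C\cup R$, where $Y_n$ is the Borel $g$-invariant set of points of least period $n$, $\mathcal S$ is the union of the sets $P(x)$ attached to the maximal solenoids of $(Y,g)$, $\mathcal C$ is the union of the finitely many cycles of subcircles of $Y$ on which $g$ has no periodic point, and $R$ is the remainder. On $Y_n$ the map $g^n$ is the identity, so $U_g$ has only $n$-th roots of unity in its spectrum on $L^2(\mu|_{Y_n})$ and the normalized restriction has discrete spectrum. On each subcircle in $\mathcal C$ the restriction of $g$ is semiconjugate to an irrational rotation and uniquely ergodic, with the unique measure measure-theoretically isomorphic to that rotation, so $\mu|_{\mathcal C}$ has discrete spectrum. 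Finally, $\mu$-almost every point is recurrent (Poincar\'e) and generic for its ergodic component, which is non-atomic off $\bigcup_nY_n$ (atomic ergodic measures are carried by periodic orbits); by the structure theory of zero-entropy graph maps (Lemmas~\ref{lem:solenoid} and~\ref{lem:max}) together with the absence of Li--Yorke pairs, the orbit-closure of such a point is either a solenoid or is contained in a cycle of circles as above, so the point lies in $\mathcal S\cup\mathcal C$; hence $\mu(R)=0$. By Remark~\ref{rem:simpleds} it remains to prove that $\mu|_{\mathcal S}$ has discrete spectrum, and this is the step I expect to be the crux.

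A zero-entropy graph map may carry uncountably many pairwise disjoint solenoidal minimal sets (Example~\ref{ex:24}), so $\mu|_{\mathcal S}$ can have uncountably many non-atomic ergodic components and Theorem~\ref{thm:discrete} is unavailable; one must instead exploit the group structure hidden in $\mathcal S$. Because in $(Y,g)$ every $P(x)$ has only degenerate connected components (Proposition~\ref{prop:collapse}), the cycle-of-graphs coordinates of Lemma~\ref{lem:solenoid} identify each $P(x)$, homeomorphically and $g$-equivariantly, with an odometer; thus, up to a $\mu$-null set, $\mathcal S$ is a disjoint union of odometers. The assignment of a point to the odometer containing it is a Borel factor map from $(\mathcal S,g)$ onto a system with trivial action; disintegrating $\mu|_{\mathcal S}$ over it and invoking unique ergodicity of odometers gives $\mu|_{\mathcal S}=\int\mathrm{Haar}_{O}\,d\rho(O)$. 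Since the Haar measure of any odometer is the image of the Haar measure of $\widehat{\Z}=\varprojlim\Z/n\Z$ under the canonical quotient, this realizes $(\mathcal S,g,\mu|_{\mathcal S})$ as a measure-theoretic factor of $\bigl(B\times\widehat{\Z},\ \mathrm{id}\times\sigma,\ \rho\times\mathrm{Haar}\bigr)$, where $B$ is a Borel parameter space for the odometers of $(Y,g)$ and $\sigma$ is translation by $1$; the latter has discrete spectrum (the functions $h\otimes\chi$, with $h\in L^2(\rho)$ arbitrary and $\chi$ a character of $\widehat{\Z}$, are eigenfunctions whose span is dense), hence so does $\mu|_{\mathcal S}$, because factors of discrete-spectrum systems have discrete spectrum. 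Feeding $\{Y_n\}$, $\mathcal C$, $\mathcal S$ and $R$ into Remark~\ref{rem:simpleds} completes the proof. The points requiring care are the Blokh-type structural claim used for $\mathcal C$ and for $\mu(R)=0$ (that a zero-entropy graph map without Li--Yorke pairs has, off the periodic points, no recurrent dynamics other than odometers and equicontinuous rotations on cycles of circles) and, more routinely, the Borel parametrization $B$ of the odometers together with the measurability of the quotient maps $\widehat{\Z}\to O$.
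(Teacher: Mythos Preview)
Your reduction via Proposition~\ref{prop:collapse} and Lemma~\ref{lem:factor-maps} is exactly what the paper does, including the observation that asymptotic recurrent pairs must be diagonal. The divergence is in how $(Y,g)$ is handled. The paper does not decompose $Y$ at all: since $(Y,g)$ has no Li--Yorke pairs, \cite[Theorem~1.5]{LOYZ17} gives zero \emph{topological sequence entropy}, hence every invariant measure has zero measure-theoretic sequence entropy, and Ku\v{s}hnirenko's theorem \cite{K67} yields discrete spectrum immediately. That is the whole argument. Your route---splitting $Y$ into periodic points, odometer-type solenoids, rotation circles, and a null remainder---is more elementary in that it avoids sequence entropy, and the idea of realizing $\mu|_{\mathcal S}$ as a factor of $(B\times\widehat{\Z},\mathrm{id}\times\sigma,\rho\times\mathrm{Haar})$ is a pleasant concrete explanation of where the eigenfunctions come from. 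The cost is the two gaps you yourself flag: the Blokh-type trichotomy you need for $\mu(R)=0$ and the $\mathcal C$-case (every non-periodic recurrent point of a zero-entropy Li--Yorke-free graph map lies in an odometer or in a cycle of circles carrying an irrational-rotation factor) is not proved in the paper and essentially requires reproducing part of Blokh's spectral decomposition; and the measurable selection of base points and quotient maps $\widehat{\Z}\to O_b$ needs a genuine argument. Both are fillable, but the paper's sequence-entropy shortcut bypasses them entirely.
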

\begin{proof}
Let $\pi\colon (G,f)\to (Y,g)$ be the factor map provided by Proposition~\ref{prop:collapse}.
Since the graph map $(Y,g)$ does not contain Li-Yorke pairs,
by \cite[Theorem~1.5]{LOYZ17} it has zero topological sequence entropy.
Then each invariant measure has zero sequence entropy,
because metric sequence entropy cannot exceed the value of topological sequence entropy (see \cite[Theorem2.6]{HY09}).
By \cite[Theorem~4]{K67}, each invariant measure of $(Y,g)$ has
discrete spectrum.
Note that every pair in $R_\pi \setminus \Delta_G$ is asymptotic, so it is immediate to
see that $\Delta_G$ has full measure for each invariant measure of $(R_\pi, f\times f)$, since it contains all recurrent points of $f\times f$ in $R_\pi$.
By Lemma~\ref{lem:factor-maps},
every invariant measure of $(G,f)$
is measure-theoretically isomorphic to some invariant measure of $(Y,g)$ and hence also has discrete spectrum.
\end{proof}

\section{Dynamics on quasi-graphs} \label{quasi-graph}
In this section, we characterize all
invariant measures of quasi-graph maps as convex combinations of finitely many invariant measures for some graph maps. Consequently, we show that every invariant measure of a quasi-graph map with zero topological entropy has discrete spectrum
and every ergodic invariant measure of a
quasi-graph map is essentially an ergodic invariant measure of some graph map. We also obtain an analog of Llibre-Misiurewicz result relating positive topological entropy
with existence of topological horseshoes.

\subsection{Preliminaries on quasi-graph maps}
Let $X$ be a compact metric space and let $L$ be an arcwise connected subset of $X$. By convention, we denote $\R_+=[0,\infty)$.
If there exists a continuous bijection
$\varphi \colon \R_+ \to L$,
then we say that $L$ is a \textit{quasi-arc with the parameterization $\varphi$}.
The point $\varphi(0)$ is called an \emph{endpoint} of $L$ and
the \emph{$\omega$-limit set} of $L$  is the set
$\omega(L)=\bigcap_{m\geq 0} \overline{\varphi([m,\infty))}$.
A quasi-arc $L$ is called \textit{oscillatory}
if $\omega(L)$ contains more than one point.
Note that the endpoint and the $\omega$-limit set of $L$ are by the definition dependent on
the parameterization $\varphi$, however
if $L$ is an oscillatory quasi-arc then the endpoint is uniquely determined and
the parameterization is unique up to homeomorphism of $\R_+$ (cf. \cite[Propositions 2.17 and 2.20]{MS17}).

A \textit{quasi-graph} is a non-degenerate, compact, arcwise connected metric space $X$ satisfying that there is a positive integer $N$ such that $\overline{Y} \setminus Y$ has at most $N$ arcwise connected components for every arcwise connected subset $Y\subset X$.
The following fact
from \cite[Theorem~2.24]{MS17} is an important characterization of quasi-graphs.

\begin{thm}\label{thm:dec-qg}
	A continuum $X$ is a quasi-graph if and only if there is a graph $G$ and pairwise disjoint oscillatory quasi-arcs $L_1,\ldots , L_n$ (with $n\geq 0$) in $X$ such that:
	\begin{enumerate}
		\item $X=G\cup \bigcup_{j=1}^n L_j$ and $End(X)\cup Br(X)\subset G$,
		\item for each $1\leq i\leq n$, $L_i \cap G=\{a_i\}$ where $a_i$ is the endpoint of $L_i$,
		\item $\omega(L_i)\subset G\cup \bigcup_{j=1}^{i-1} L_j$ for each $1 \leq  i \leq n$, and
		\item if $\omega (L_i) \cap L_j\neq  \emptyset$ for some $1\leq i,j\leq n$, then $\omega (L_i) \supset L_j$.
	\end{enumerate}
\end{thm}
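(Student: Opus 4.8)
The plan is to prove the two implications separately; the forward one (a decomposition as in (1)--(4) makes the continuum $X$ a quasi-graph) is essentially bookkeeping, and all the content is in the converse. \emph{Forward direction.} Suppose $X=G\cup\bigcup_{j=1}^nL_j$ with (1)--(4). Arcwise connectedness of $X$ is immediate: connectedness of $G$ follows from that of $X$ (if $G$ split into two relatively clopen pieces, (2)--(3) would propagate the splitting to all of $X$), so the connected graph $G$ is arcwise connected, while every point of $L_j$ is joined to $a_j\in G$ inside $L_j\cup\{a_j\}$ by the $\varphi_j$-image of a compact initial segment of $\R_+$. For the defining complexity bound, fix an arcwise connected $Y\subseteq X$ and count the arcwise connected components of $\overline Y\setminus Y$: the contribution of $Y\cap G$ is bounded by a constant depending only on $G$ (a graph satisfies the bounded-components property, with bound controlled by its numbers of vertices and edges -- the $n=0$ case, which is elementary); for each $j$, $Y\cap L_j$ is the $\varphi_j$-image of a subinterval of $\R_+$, so its closure adds at most the two interval-endpoints and, when that interval is unbounded, the whole continuum $\omega(L_j)$; and by (3)--(4) each $\omega(L_j)$ is a subcontinuum whose intersection pattern with $G$ and with the $L_i$, $i<j$, is rigidly prescribed and hence contributes boundedly many pieces. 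Summing gives a bound $N=N(X)$ independent of $Y$.

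\emph{Converse, finiteness.} Let $X$ be a quasi-graph with bound $N$. First record the finiteness features: $X$ has finitely many endpoints, finitely many branch points, and each branch point has finite valence. Here $N$ is a counting device: $Y=X\setminus\{e_1,\dots,e_k\}$ (removing endpoints) is still arcwise connected since a valence-$1$ point is never interior to an arc, and $\overline Y\setminus Y=\{e_1,\dots,e_k\}$ has $k$ components, so $k\le N$; branch points and valences are handled similarly, by exhibiting an arcwise connected ``comb'' -- a fixed portion of $X$ with the tips of suitably many short spikes deleted -- whose $\overline Y\setminus Y$ has more than $N$ components if the relevant quantity is too large. Next let $\mathcal N$ be the (closed, nowhere dense) set of points at which $X$ fails to be locally connected; if $\mathcal N=\emptyset$ a direct argument using $N$ shows $X$ is a graph, i.e.\ the case $n=0$; otherwise $\mathcal N$ is exactly the union of the oscillation loci of the quasi-arcs to be extracted.

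\emph{Converse, building the decomposition.} Using the characterization of oscillatory quasi-arcs from \cite[Propositions~2.17 and~2.20]{MS17}, one shows that, after deleting the finite set $Br(X)\cup End(X)$, the connected pieces of $X$ that are not ``eventually locally connected arcs'' assemble into finitely many \emph{maximal} oscillatory quasi-arcs $L_1,\dots,L_n$ (finiteness again from $N$); pairwise disjointness and (2) come from maximality together with the fact that, $Br(X)$ being finite, each $L_j$ meets the rest of $X$ in a single point, its endpoint $a_j$. Put $G:=\overline{X\setminus\bigcup_jL_j}$; then $G$ is locally connected, hence -- being itself a quasi-graph -- a graph, and $X=G\cup\bigcup_jL_j$ with $End(X)\cup Br(X)\subset G$, which is (1). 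For (3) one performs a topological sort: declare $L_i\preceq L_j$ when $\omega(L_j)\cap L_i\ne\emptyset$, show this relation is acyclic, extend it to a linear order, and relabel so that $\omega(L_i)\subseteq G\cup\bigcup_{k<i}L_k$. Finally, for (4): if $\omega(L_i)\cap L_j\ne\emptyset$ then $\varphi_j^{-1}(\omega(L_i)\cap L_j)$ is closed in $\R_+$, and near each of its points the subcontinuum $\omega(L_i)$ is forced to accompany the oscillation of $L_i$; connectedness of $\omega(L_i)$ then makes it contain all of $L_j$, i.e.\ $\omega(L_i)\supseteq L_j$.

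\emph{Main obstacle.} The genuine difficulty is everything in the converse beyond the finiteness count: turning the combinatorial bound $N$ into the geometric statement that all non-local-connectedness is carried by finitely many honest quasi-arcs organized in a finite hierarchy. The three delicate points are (i) that each oscillatory piece is parametrized by a single ray rather than by a more complicated set (this is where the rigidity of oscillatory quasi-arcs from \cite{MS17} enters); (ii) acyclicity of $\preceq$, i.e.\ that no two of the $L_j$ mutually accumulate on one another; and (iii) property (4). Each requires the careful local analysis of arcwise connected subsets of $X$, and of their closures, that the bounded-components property permits but does not supply directly. The forward implication and the bookkeeping around (1)--(2), by contrast, are routine.
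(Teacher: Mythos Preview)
The paper does not prove this theorem at all: it is quoted verbatim as \cite[Theorem~2.24]{MS17} and used as a black box throughout Section~3. So there is no ``paper's own proof'' to compare against; the authors simply import the structure theorem from Mai--Shi.

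That said, a few remarks on your sketch are in order. Your outline tracks the Mai--Shi argument in broad strokes (finiteness of $End(X)$, $Br(X)$ and valences from the bound $N$; extraction of maximal oscillatory quasi-arcs; a partial order on the $L_j$ via accumulation), and you are right that the forward direction is bookkeeping while the converse carries the weight. However, what you present is explicitly a plan rather than a proof, and you yourself flag the three hard points (single-ray parametrization, acyclicity of $\preceq$, and property~(4)) without supplying arguments for them. In particular, your claim for (4) --- that connectedness of $\omega(L_i)$ forces $\omega(L_i)\supseteq L_j$ once it meets $L_j$ --- needs more: one must rule out that $\omega(L_i)$ sits inside a proper closed sub-quasi-arc of $L_j$, and this uses the structure of $\omega$-limit sets of oscillatory quasi-arcs developed in \cite{MS17}. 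Similarly, the assertion that $G:=\overline{X\setminus\bigcup_j L_j}$ is locally connected (hence a graph) is not automatic and in \cite{MS17} occupies several lemmas. If your goal is a self-contained write-up, you will need to fill these in; if not, citing \cite[Theorem~2.24]{MS17} as the paper does is the honest route.
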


Note that in Theorem \ref{thm:dec-qg}, the case of $n=0$ is the simplest situation, when a quasi-graph is in fact a graph.

First we have the following useful observation on quasi-graphs.

\begin{prop}\label{prop:arcs-in-QG}
Let $X$ be a quasi-graph with $G$ and $L_1,\dotsc,L_n$ as in Theorem~\ref{thm:dec-qg}.
Then: for every two different points $a,b\in X$ there are only finitely many different
arcs in $X$ with endpoints $a$ and $b$,
furthermore, if $a$ and $b$ are in the same quasi-arc $L_i$
then there is a unique arc with endpoints $a$ and $b$.
\end{prop}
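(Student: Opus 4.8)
The plan is to use the decomposition $X=G\cup\bigcup_{j=1}^n L_j$ from Theorem~\ref{thm:dec-qg} together with the elementary fact that in a graph $G$ there are only finitely many arcs joining two given points (since $G$ is a finite union of edges, an arc is determined by the finite sequence of edges it traverses, and there are only finitely many such non-self-intersecting sequences). First I would fix two distinct points $a,b\in X$ and consider an arbitrary arc $A\subset X$ with endpoints $a$ and $b$. The key structural observation is to analyze $A\cap L_i$ for each oscillatory quasi-arc $L_i$. Parameterize $L_i$ by $\varphi_i\colon\R_+\to L_i$; since $A$ is an arc (hence compact) and $\varphi_i$ is a continuous bijection, $\varphi_i^{-1}(A\cap L_i)$ is a subset of $\R_+$ whose image is closed in $L_i$. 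I claim $A\cap L_i$ is contained in $\varphi_i([0,t_i])$ for some $t_i<\infty$: indeed if $A$ met points $\varphi_i(s_k)$ with $s_k\to\infty$, then by compactness of $A$ these would accumulate in $A$, and the accumulation points would lie in $\omega(L_i)\setminus L_i\subset G\cup\bigcup_{j<i}L_j$, but this forces $A$ to re-enter $L_i$ only through the single gluing point $a_i=L_i\cap G$ (by item (2) of Theorem~\ref{thm:dec-qg}), which an injective path cannot do infinitely often. Hence each $A\cap L_i$ lies in a genuine subarc $\varphi_i([0,t_i])$ of $L_i$.

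Next I would show that $A$ is in fact a finite concatenation of arcs, each lying entirely in a single piece: either inside $G$, or inside one of the finitely many initial subarcs $\varphi_i([0,t_i])$. The point is that the pieces $G$ and $\varphi_i((0,\infty))$ partition $X\setminus\{a_1,\dots,a_n\}$ into sets each of which is arcwise ``simple,'' and $A$, being an injective image of $[0,1]$, can transition between pieces only through the finitely many points $a_i$ (and the finitely many points of $G$ where $\omega(L_i)$'s touch, which are still in $G$). So traversing $A$ from $a$ to $b$ we pass through a finite, non-repeating sequence of pieces $Q_0,Q_1,\dots,Q_m$, entering/leaving at gluing points; within a subarc of $L_i$ the arc is uniquely determined by its two endpoints (an arc inside a sub-quasi-arc $\varphi_i([0,t_i])\cong[0,1]$ is unique), and within $G$ there are finitely many choices. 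Since there are only finitely many pieces (namely $n+1$), only finitely many gluing points, and finitely many arcs in $G$ between any pair of gluing points, there are only finitely many possible arcs $A$ in total. This also immediately yields the ``furthermore'': if $a,b$ both lie in a single $L_i$, then taking $t_i$ large enough that $a,b\in\varphi_i([0,t_i])$, any arc from $a$ to $b$ that left this subarc would have to exit and re-enter through a gluing point, but $\varphi_i([0,t_i])$ is glued to the rest of $X$ only at $a_i=\varphi_i(0)$ (and possibly along $\omega(L_i)$ if $b$ or $a$ is past where $L_i$ first returns—here I must be a little careful), so after pruning we conclude the arc stays in $\varphi_i([0,t_i])\cong[0,1]$, where it is unique.

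The main obstacle I anticipate is the claim that $A\cap L_i$ cannot extend to arbitrarily large parameter values, i.e., ruling out the ``spiraling'' behavior where an arc follows $L_i$ out toward its $\omega$-limit set infinitely. The subtlety is that $\omega(L_i)$ may itself contain other quasi-arcs $L_j$ with $j<i$ (item (4) of Theorem~\ref{thm:dec-qg}), so an injective arc could conceivably approach $\omega(L_i)$ along $L_i$ without self-intersecting. The resolution is topological: an arc is homeomorphic to $[0,1]$, hence has exactly two non-cut points, and ``following $L_i$ to infinity and then continuing'' would make the relevant accumulation point a non-cut point behaving like an interior point — more concretely, one uses that $\varphi_i([0,\infty))\cup\omega(L_i)=\overline{L_i}$ is not arcwise connected in the way that would permit an arc to traverse from inside $L_i$ across $\omega(L_i)$, because the only access between $L_i$ and $\overline{L_i}\setminus L_i$ within $X$ is through $a_i$ (the rest of $\omega(L_i)$ is a limit, not a path-connection). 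I would make this precise by invoking local arcwise connectivity failing at points of $\omega(L_i)$ approached from along $L_i$, or equivalently by noting $\varphi_i$ restricted to $[t,\infty)$ is not contained in any arc for $t$ large. Once this ``no infinite spiraling'' lemma is in hand, the rest is bookkeeping with the finite decomposition.
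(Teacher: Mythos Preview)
Your approach differs substantially from the paper's. The paper dispatches finiteness in one line: since $End(X)\cup Br(X)\subset G$ by Theorem~\ref{thm:dec-qg}, the sum of valences over all branching points of $X$ is finite, and this already bounds the number of arcs between any two points. For uniqueness when $a,b\in L_i$, the paper argues directly via valence: any second arc $\alpha$ from $a=\varphi(s)$ to $b=\varphi(t)$ must have $\alpha((0,1))$ disjoint from $\varphi((s,t))$ (those points have valence $2$), hence near $b$ it must run outward along $\varphi((t,\infty))$ and in fact cover all of it; but then continuity of $\alpha$ on $[0,1]$ forces $\lim_{u\to\infty}\varphi(u)$ to exist as the single point $\alpha(c)$, so $\omega(L_i)=\{\alpha(c)\}$, contradicting that $L_i$ is oscillatory.

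Your decomposition-and-counting plan could also be made to work, but the step you correctly flag as the main obstacle has a genuine gap as written. Your initial argument that $A\cap L_i\subset\varphi_i([0,t_i])$ (``$A$ would have to re-enter $L_i$ through $a_i$, which an injective path cannot do infinitely often'') does not rule out the possibility that $A$ simply contains an entire tail $\varphi_i([t_0,\infty))$: then $A$ never leaves $L_i$ along that stretch and there is no re-entering to count. Your later suggested fixes (non-cut-point structure, failure of local arcwise connectedness, ``only access is through $a_i$'') are vague and do not obviously close this. What closes it cleanly is precisely the oscillatory hypothesis---the one ingredient your write-up never invokes. If $\varphi_i([t_0,\infty))\subset A$ and $h\colon[0,1]\to A$ is a homeomorphism, then $h^{-1}\circ\varphi_i\colon[t_0,\infty)\to[0,1]$ is a continuous injection into an interval, hence monotone with a limit $c$; thus $\varphi_i(u)\to h(c)$ as $u\to\infty$ and $\omega(L_i)=\{h(c)\}$ is a singleton, contradicting that $L_i$ is oscillatory. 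This is exactly the mechanism driving the paper's uniqueness proof, and once you have it your bookkeeping goes through---though the paper's branching-point observation makes the finiteness bookkeeping unnecessary.
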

\begin{proof}
By Theorem~\ref{thm:dec-qg}, all the endpoints and branching points of $X$ are in $G$.
As $G$ is a graph, the sum of valences of all branching points is finite.
Then for every two different points $a,b\in X$ there are only finite many different
arcs in $X$ with endpoints $a$ and $b$.

Now assume that $a$ and $b$ are in the same quasi-arc $L_i$, and let $\varphi\colon \R_+\to L_i$ be a parameterization of $L_i$.
By Theorem~\ref{thm:dec-qg}, $\omega(L_i)\cap L_i=\emptyset$, and so
$\varphi$ is a homeomorphism.
Pick $s,t\in\R_+$ such that $\varphi(s)=a$ and $\varphi(t)=b$.
Without loss of generality, assume that $s<t$.
Then $\varphi|_{[s,t]}$ is an arc with $\varphi(s)=a$ and $\varphi(t)=b$.
Let $\alpha\colon [0,1]\to X$ be another arc (different from $\varphi$) with endpoints $a$ and $b$, and say $\alpha(0)=a$ and $\alpha(1)=b$.
As every point in $\varphi((s,t))$ has valence $2$,
$\alpha((0,1))\cap \varphi((s,t))=\emptyset$.
Then there exists $c\in (0,1)$ such that $\alpha((c,1))=\varphi((t,\infty))$.
But this implies that $\omega(L_i)=\{\alpha(c)\}$,
which is in contradiction to the assumption in Theorem~\ref{thm:dec-qg} that $L_i$ is an oscillatory quasi-arc.
\end{proof}

By the above, any non-degenerate arcwise connected closed set $H$ of a quasi-graph $G$ is again a quasi-graph, but the positive integer $N$ in the definition can increase as some path in $G$ may not belong to $H$.

\begin{prop}\label{prop:quasi-graph-onto}
Let $X$ be a quasi-graph and let $f\colon X\to X$ be a continuous map.
Then  $\bigcap_{n=0}^\infty f^n(X)$ is also a quasi-graph if it is non-degenerate.
\end{prop}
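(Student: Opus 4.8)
The plan is to prove that $X_\infty:=\bigcap_{n=0}^\infty f^n(X)$ is a closed, arcwise connected subset of the quasi-graph $X$; granting this, and the hypothesis that $X_\infty$ is non-degenerate, the observation recorded right after Proposition~\ref{prop:arcs-in-QG} shows that $X_\infty$ is itself a quasi-graph. That $X_\infty$ is compact (and closed in $X$) is immediate, since $f(X)\subseteq X$ forces $\big(f^n(X)\big)_{n\ge 0}$ to be a decreasing sequence of non-empty compact sets. So the whole content lies in verifying arcwise connectedness of the intersection of this nested sequence — and this is precisely where the special structure of quasi-graphs must enter, since in general the intersection of a decreasing sequence of Peano continua need not be arcwise connected.

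First I would observe that each $f^n(X)$ is arcwise connected: it is a continuous image of the arcwise connected (hence path connected) space $X$, so it is path connected, and a path connected metric space is arcwise connected. Consequently, for any two distinct points $a,b\in X_\infty$ and every $n\ge 0$ there is an arc contained in $f^n(X)$ with endpoints $a$ and $b$; since $f^n(X)\subseteq X$, such an arc is one of the finitely many arcs in $X$ with endpoints $a$ and $b$ guaranteed by Proposition~\ref{prop:arcs-in-QG}. Denoting these finitely many arcs by $A_1,\dots,A_k$ and putting $S_n=\{i\in\{1,\dots,k\}:A_i\subseteq f^n(X)\}$, each $S_n$ is non-empty, and $S_{n+1}\subseteq S_n$ because $f^{n+1}(X)\subseteq f^n(X)$; a decreasing sequence of non-empty subsets of the finite set $\{1,\dots,k\}$ has non-empty intersection, so some $A_i$ satisfies $A_i\subseteq f^n(X)$ for all $n$, i.e.\ $A_i\subseteq X_\infty$. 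Thus $X_\infty$ contains an arc joining $a$ and $b$, and since $a,b$ were arbitrary distinct points, $X_\infty$ is arcwise connected.

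It then remains to assemble the pieces: $X_\infty$ is a non-degenerate (by hypothesis), compact, arcwise connected, closed subset of the quasi-graph $X$, hence a quasi-graph. Concretely, for every arcwise connected $Y\subseteq X_\infty$ the closure of $Y$ taken in $X_\infty$ coincides with its closure taken in $X$ (as $X_\infty$ is closed in $X$), so $\overline{Y}\setminus Y$ has at most $N$ arcwise connected components, where $N$ is the integer witnessing the defining property for $X$; this is exactly the required bound. The only delicate point in the argument is the arcwise connectedness step, and the key leverage there is the finiteness of the family of arcs joining two fixed points of a quasi-graph — this is what Proposition~\ref{prop:arcs-in-QG} supplies and what lets a simple nested-finite-sets (pigeonhole) argument push an entire arc into $X_\infty$; without such finiteness one could not transfer arcs through the intersection.
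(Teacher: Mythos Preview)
Your proof is correct and follows essentially the same approach as the paper's: both arguments reduce to showing $X_\infty$ is arcwise connected, and both do so by combining the finiteness of arcs between two given points (Proposition~\ref{prop:arcs-in-QG}) with the nestedness of $\big(f^n(X)\big)_{n\ge 0}$ to push a single arc into the intersection. Your presentation is slightly more detailed (the explicit sets $S_n$ and the direct verification of the defining $N$-bound at the end), but the underlying idea is identical.
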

\begin{proof}
First note that $f(X)$ is a quasi-graph
since it is arcwise connected as an image of an arcwise connected set and consequently,
$f^n(X)$ is a quasi-graph  for every $n\geq 1$.
Let $X_0=\bigcap_{n=0}^\infty f^n(X)$.
It is sufficient to show that $X_0$ is arcwise connected.
Fix any two different points $p,q\in X_0$ (if they exist).
For every $n\geq 1$, as $f^n(X)$ is arcwise connected,
there exists an arc $J_n\subset X$ with endpoints $p,q$ such that $J_n\subset f^n(X)$.
By Proposition~\ref{prop:arcs-in-QG},
there are only finitely many different arcs in $X$ connecting $p$ and $q$.
Therefore, since $f^n(X)$ is a nested sequence,
there exists an  arc $J\subset X$ with endpoints $p,q$
such that $J\subset f^n(X)$ for all $n\geq 0$.
Then $J\subset X_0$, which implies that $X_0$ is arcwise connected.
\end{proof}

A non-oscillatory quasi-arc in a compact metric space $X$ is called
a \textit{$0$-order oscillatory quasi-arc}.
An oscillatory quasi-arc $L$ is called a \textit{$k$-order oscillatory quasi-arc}
for some $k>0$ if $\omega(L)$ contains at least one $(k-1)$-order oscillatory
quasi-arc, and $\omega(K)$ does not contain any $(k-1)$-order oscillatory quasi-arc
for every quasi-arc $K$ in $\omega (L)$.
It is not hard to see that the $\omega$-limit set $\omega(L)$ of an
oscillatory quasi-arc $L$ of order $k$
contains at least one oscillatory quasi-arc $K_i\subset \omega(L)$ of order $i$
for each $i=0,1,\ldots, k-1$, and does not contain any quasi-arc of order $n\geq k$.
The following lemma combines \cite[Lemma~3.1, Corollaries 3.2 and 3.3
and Proposition 3.4]{MS17}.

\begin{lem}\label{lem:img-qg}
Let $X$ be a quasi-graph and let $f\colon X\to X$ be a continuous map.
\begin{enumerate}
	\item If $G\subset X$ is a graph, then $f(G)$ does not contain any oscillatory quasi-arc.
	\item If $L$ and $K$ are two oscillatory quasi-arcs in $X$
  with $L\subset f(K)$, then
   the order of $L$ is not bigger than the order of $K$ and
          $\omega(L)\subset f(\omega(K))$.
\end{enumerate}
\end{lem}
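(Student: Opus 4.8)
The statement repackages \cite[Lemma~3.1, Corollaries~3.2 and~3.3, Proposition~3.4]{MS17}, and the plan is to assemble these, or rather to re-derive each item from the structure theorem for quasi-graphs together with soft facts about continuous images.

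For part~(1), the plan is as follows. First I would write the graph $G$ as a finite union of arcs $I_1,\dots,I_m$, so that $f(G)=\bigcup_{k=1}^m f(I_k)$ is a finite union of continuous images of $[0,1]$ and hence a locally connected continuum by the Hahn--Mazurkiewicz theorem. On the other hand $f(G)$ is a non-degenerate, compact, arcwise connected subset of the quasi-graph $X$, hence is itself a quasi-graph by the remark following Proposition~\ref{prop:arcs-in-QG}; write it as $G'\cup L_1'\cup\dots\cup L_r'$ as in Theorem~\ref{thm:dec-qg}. I would then argue $r=0$: if some oscillatory quasi-arc $L_i'$ appeared, with parameterization $\varphi$, its $\omega$-limit set would contain two distinct points $p\ne q$, both of which lie in $f(G)$ by compactness, and one checks that $f(G)$ cannot be locally connected at $p$, since any small connected neighbourhood of $p$ in the one-dimensional continuum $f(G)$ would be forced to meet the tail $\varphi([t,\infty))$ for every $t$ while excluding $q$. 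Thus $f(G)$ is a graph and in particular contains no oscillatory quasi-arc.

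For part~(2), write $\varphi_K\colon\R_+\to K$ and $\varphi_L\colon\R_+\to L$ for the parameterizations. The engine is the inclusion $\omega(L)\subset f(\omega(K))$, which I would get from $\overline{f(K)}=f(\overline K)=f(K)\cup f(\omega(K))$ (by continuity and compactness, using $\overline K=K\cup\omega(K)$), together with the fact that a point $p\in\omega(L)$, being a limit of the tail $\varphi_L([m,\infty))$ inside $f(K)$, must have its $\varphi_K$-preimages escaping to infinity along $K$ — here one uses that in a quasi-graph $L$ is disjoint from $\omega(L)$ (a common point would have infinite valence) and, more to the point, that $\omega(L)$ cannot meet $f(K)$ at all. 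Granting this, the order inequality follows by induction on the order of $K$: if $K$ is non-oscillatory then $\omega(K)$ is a single point, so $\omega(L)\subset f(\omega(K))$ is a single point and $L$ is non-oscillatory too; if $K$ has order $k\ge 1$, then every quasi-arc contained in $\omega(L)$ lies in $f(\omega(K))$, and applying the inductive hypothesis to $f$ along the quasi-arcs of order $\le k-1$ that populate $\omega(K)$ shows that $\omega(L)$ contains no quasi-arc of order $\ge k$, i.e.\ $L$ has order at most $k$.

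I expect the main obstacle to be the topological rigidity used in part~(1) — that the continuous image of a graph inside a one-dimensional quasi-graph cannot ``wrap around'' infinitely — and the companion ``escape to infinity'' step in the proof of $\omega(L)\subset f(\omega(K))$; both rest on the finer analysis of \cite{MS17} (local finiteness of valences in a quasi-graph and the normal form for oscillatory quasi-arcs), which I would either invoke directly or reproduce in this setting.
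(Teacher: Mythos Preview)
The paper does not prove this lemma at all; it simply records that the two items combine \cite[Lemma~3.1, Corollaries~3.2--3.3, Proposition~3.4]{MS17}. Your opening sentence already says exactly this, so at the level of what the paper does, you are done. The sketch you add is extra, and part~(1) via Hahn--Mazurkiewicz plus ``an oscillatory quasi-arc prevents local connectedness at any point of its $\omega$-limit set'' is a perfectly good line.

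There is, however, a genuine gap in your sketch for part~(2). The assertion ``$\omega(L)$ cannot meet $f(K)$ at all'' is false in general, so it cannot be the reason preimages escape to infinity. On the Warsaw circle $X=G\cup L_1$, take $K=L=L_1$ and any continuous $f\colon X\to X$ with $L_1\subset f(L_1)$ and $f(a_1)\in\omega(L_1)$, where $a_1=\varphi_{L_1}(0)$ is the endpoint of $L_1$; then $f(a_1)\in\omega(L)\cap f(K)$. (By Proposition~\ref{prop:k-order-OSC-arc} such maps exist: $f$ sends a tail $L_1[s,\infty)$ onto $L_1[r,\infty)$, and the initial arc $\varphi_{L_1}([0,s])$ may be sent along the connecting arc into the limit segment.) What actually drives the escape-to-infinity step is part~(1): for every $M$, $f(\varphi_K([0,M]))$ is the continuous image of an arc, hence a graph; and a subgraph $G'\subset X$ meets each oscillatory $L_i$ only in a parametrically bounded set (otherwise an arc in $G'$ would contain a half-tail $\varphi_{L_i}([t_0,\infty))$ together with a single point of $\omega(L_i)$, forcing $|\omega(L_i)|=1$, a contradiction). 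Thus $\{t:\varphi_L(t)\in f(\varphi_K([0,M]))\}$ is bounded, which forces $s_n\to\infty$ and yields $p\in f(\omega(K))$.

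Your induction for the order inequality also needs a touch more: from $\omega(L)\subset f(\omega(K))$ you get that any quasi-arc $M\subset\omega(L)$ lies in $f(\omega(K))$, but $\omega(K)$ is not a single quasi-arc, so you cannot invoke the inductive hypothesis directly. One has to argue (again using part~(1)) that $M$ is already covered by $f$ of a single oscillatory quasi-arc inside $\omega(K)$; this is precisely the content of \cite[Corollary~3.3]{MS17}, so if you are citing MS17 anyway it is cleanest to quote it rather than reprove it.
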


Let $L$ be an oscillatory quasi-arc in a quasi-graph $X$.
For $t\in\R_+$, we will use  $L[t,\infty)$ to denote $\varphi ([t,\infty))$
with respect to a given parameterization $\varphi\colon \R_+\to L$. The following result is {\cite[Proposition 3.5]{MS17}}.

\begin{prop}\label{prop:k-order-OSC-arc}
Let $X$ be a quasi-graph and let $f\colon X\to X$ be a continuous map.
Suppose that $L$ and $K$ are $k$-order oscillatory quasi-arcs
in $X$ for some $k\geq 1$ and $\varphi\colon \R_+\to L$ and $\phi\colon \R_+\to K$
are parameterizations of $L$ and $K$ respectively.
If $L\subset f(K)$, then $f(\omega(K))=\omega(L)$ and
$f(K[s,\infty))=L[r,\infty)$ for some $r,s\in\R_+$.
\end{prop}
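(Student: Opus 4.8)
The plan is to deduce everything from Lemma~\ref{lem:img-qg} together with one elementary fact: a continuous self-map of a compact metric space commutes with intersections of decreasing sequences of compacta. Fix parameterizations $\varphi\colon\R_+\to L$ and $\phi\colon\R_+\to K$, and put $\psi=f\circ\phi\colon\R_+\to X$, so that $\psi([t,\infty))=f(K[t,\infty))$ and $\psi(\R_+)=f(K)\supseteq L$; recall that for an oscillatory quasi-arc one has $\omega(L)\cap L=\emptyset$ and $\overline L=L\cup\omega(L)$, and similarly for $K$. Since $\overline{K[m,\infty)}$ decreases to $\omega(K)$, the elementary fact gives
\begin{equation}\tag{$\star$}
f(\omega(K))=\bigcap_{m\ge 1}\overline{f(K[m,\infty))}.
\end{equation}
On the other hand, Lemma~\ref{lem:img-qg}(2) applied to $L\subseteq f(K)$ already yields $\omega(L)\subseteq f(\omega(K))$ (and that $L$ and $K$ both have order $k$). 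It therefore suffices to establish the \emph{tail identity}: for every sufficiently large $s$ one has $f(K[s,\infty))=L[r(s),\infty)$, with $r(s)$ non-decreasing in $s$ and $r(s)\to\infty$. Indeed, substituting this into $(\star)$ (intersecting over large $s$ only, which is harmless) gives $f(\omega(K))=\bigcap_{s}\big(L[r(s),\infty)\cup\omega(L)\big)=\omega(L)$, the first assertion; the tail identity for one value of $s$ is the second.

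To prove the tail identity, fix $s$ and write $f(K)=\Gamma_s\cup\psi([s,\infty))$ where $\Gamma_s:=f(\phi([0,s]))$. Since $\phi([0,s])$ is an arc, hence a graph, Lemma~\ref{lem:img-qg}(1) tells us the compact arcwise connected set $\Gamma_s$ contains no oscillatory quasi-arc. The crucial claim is that \emph{$\Gamma_s\cap L$ is bounded in the parameter of $L$}, say $\Gamma_s\cap L\subseteq\varphi([0,T_s])$. Granting it, the tail $L(T_s,\infty)$ misses $\Gamma_s$; being contained in $L\subseteq f(K)=\Gamma_s\cup\psi([s,\infty))$, it then lies inside $\psi([s,\infty))$. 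Now $\psi([s,\infty))$ is arcwise connected (image of the arcwise connected $K[s,\infty)$), contains the tail $\varphi((T_s,\infty))$, and by $(\star)$ together with $\omega(K)\subseteq\overline{K[s,\infty)}$ accumulates onto all of $\omega(L)$. Using Proposition~\ref{prop:arcs-in-QG} and Theorem~\ref{thm:dec-qg} — branching points of $X$ lie in $G$, so far out $L$ has only valence-$2$ points, and only finitely many arcs join two points of $X$ — one checks that for $s$ (hence $T_s$) large, an arcwise connected set containing $\varphi((T_s,\infty))$ and accumulating only where that tail does admits no extra branch, so $\psi([s,\infty))=L[r(s),\infty)$ for some $r(s)$, evidently non-decreasing in $s$. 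Finally $r(s)\to\infty$: otherwise a tail of $L$, an oscillatory quasi-arc of order $k$, would lie in $f(\omega(K))$ by $(\star)$, contradicting the order bookkeeping of Lemma~\ref{lem:img-qg} since the oscillatory quasi-arcs inside $\omega(K)$ have order at most $k-1$ while its graph part maps, by Lemma~\ref{lem:img-qg}(1), to a set with no oscillatory quasi-arc.

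I expect the crucial claim to be the main obstacle. If $\Gamma_s\cap L$ were unbounded, then $\varphi(t_j)\in\Gamma_s$ for some $t_j\to\infty$, and since $\Gamma_s$ is closed it contains an accumulation point $w$ of $\{\varphi(t_j)\}$, necessarily with $w\in\omega(L)$. So $L$ re-enters every neighbourhood of $w$ infinitely often while meeting the closed set $\Gamma_s$ each time; since $\Gamma_s$ is an arcwise connected subset of the quasi-graph $X$ with no oscillatory quasi-arc — hence, by the structure theory of quasi-graphs, locally a finite star at each of its points — following $L$ near $w$ should let one extract from $\Gamma_s$ an arcwise connected piece that returns to a fixed small neighbourhood of $w$ along an unbounded parameter, i.e.\ an oscillatory quasi-arc inside $\Gamma_s$, contradicting Lemma~\ref{lem:img-qg}(1). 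Here $k\ge 1$ is essential: when $k=0$ a tail of $L$ has arc closure and could well sit inside a graph, whereas for $k\ge 1$ the tails of $L$ genuinely oscillate. Making this extraction precise requires a careful local analysis of how a graph-like continuum sits inside a quasi-graph relative to an oscillatory quasi-arc; everything else — the reduction via $(\star)$ and the passage from the tail identity to $f(\omega(K))=\omega(L)$ — is routine point-set topology.
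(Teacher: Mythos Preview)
The paper does not prove this proposition itself; it is quoted from \cite[Proposition~3.5]{MS17}, so there is no in-paper argument to compare against and your outline must be judged on its own.

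Your reduction via $(\star)$ is fine, and the ``crucial claim'' you flag as the main obstacle is actually straightforward. If $\Gamma_s\cap L$ were unbounded, pick $t_j\to\infty$ with $\varphi(t_j)\in\Gamma_s$; for large $j$ the points $\varphi(t_j),\varphi(t_{j+1})$ lie in a common $L_i$, so by Proposition~\ref{prop:arcs-in-QG} the \emph{unique} arc between them in $X$ is $\varphi([t_j,t_{j+1}])$, and arcwise connectedness forces it into $\Gamma_s$. Hence $\Gamma_s\supset\varphi([t_1,\infty))$, an oscillatory quasi-arc, contradicting Lemma~\ref{lem:img-qg}(1). No delicate local extraction is needed.

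The genuine gap is the reverse inclusion in the tail identity. From the crucial claim you get only $L(T_s,\infty)\subset\psi([s,\infty))$; you then assert $\psi([s,\infty))=L[r(s),\infty)$ on the grounds that this arcwise connected set ``accumulat[es] only where that tail does''. But the accumulation set of $\psi([s,\infty))$ is $f(\omega(K))$, and the containment $f(\omega(K))\subset\omega(L)$ is precisely the unproved half of the proposition, so the step is circular as written. The valence-$2$ remark alone is also insufficient: if $L$ is eventually $L_i$, valence $2$ does give that any \emph{arc} leaving $L_i\setminus\{a_i\}$ must pass through the endpoint $a_i$, so what one actually needs is $a_i\notin\psi([s,\infty))$ for large $s$. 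Establishing that requires the order bookkeeping you only sketch for $r(s)\to\infty$: if $a_i$ persisted in every $\psi([s,\infty))$, the unique-arc trick would place all of $L$ inside $f(\omega(K))$, and one must then argue (again via Proposition~\ref{prop:arcs-in-QG}, to localize a tail of $L$ in a single $f(L_j)$ rather than merely in the union) that this contradicts $\mathrm{order}(L_j)<k$ for every $L_j\subset\omega(K)$. None of this is carried out, so as it stands both conclusions of the proposition remain unproved.
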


Before proceeding, firstly we extend \cite[Proposition 3.5]{MS17} as follows.
Two quasi-arcs $L,K$, with parameterizations
$\varphi,\phi \colon \R_+ \to X$ respectively, are called
\textit{eventually the same}
if there are $s,t\geq 0$ such that $\varphi([s,\infty))=\phi([t,\infty))$.

\begin{lem}\label{lem:exact-rank}
Let $X$ be a quasi-graph, $f\colon X\to X$ a continuous surjection and $L\subset X$ a $k$-order (with $k\ge 1$) oscillatory quasi-arc with a parameterization $\varphi\colon \R_+\to L$. Then:
\begin{enumerate}

\item \label{lem:exact-rank:firstitem}
There is an oscillatory quasi-arc $K$ in $X$
such that $L [a, \infty)\subset f(K)$ for some $a\in \R_+$.

\item \label{lem:exact-rank:seconditem}
If $K$ is an oscillatory quasi-arc in $X$ such that $L[a, \infty)\subset f (K)$ for some $a\in \R_+$, then the order of $K$ is exactly $k$,
$f(K)$ and $L$ are eventually the same and $\omega(L)= f(\omega(K))$.

\item \label{lem:exact-rank:thirditem} $f(L)$ contains a $k$-order oscillatory quasi-arc.
\end{enumerate}
\end{lem}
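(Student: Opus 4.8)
The plan is to prove the three items in order, using Proposition~\ref{prop:k-order-OSC-arc} and Lemma~\ref{lem:img-qg} as the main engines, together with the surjectivity of $f$ which is now available (it was not assumed in \cite[Proposition 3.5]{MS17}).

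For item~\eqref{lem:exact-rank:firstitem}: I would argue that since $f$ is surjective, the set $f^{-1}(L)$ is a nonempty closed set meeting every ``tail'' $L[m,\infty)$ in a set whose image is cofinal in $L$. The key observation is that $L$ is arcwise connected and oscillatory, so preimages of long sub-arcs of $L$ must contain long arcs somewhere in $X$; since there are only finitely many branching points and only finitely many arcs between any two points (Proposition~\ref{prop:arcs-in-QG}), a compactness/pigeonhole argument produces a single arcwise connected piece $Y\subset X$ with $f(Y)\supset L[a,\infty)$ for some $a$. Such a $Y$ must be (or contain) an oscillatory quasi-arc $K$, because if $f(Y)$ is unbounded in the parameterization of $L$ while $Y$ were eventually a genuine arc, then $f(Y)$ would be contained in the image of a graph, contradicting Lemma~\ref{lem:img-qg}(1). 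This is the step I expect to be the main obstacle: carefully extracting from the surjectivity of $f$ a single quasi-arc $K$ (rather than a messy union of pieces) whose image covers a tail of $L$, while controlling how the finitely many branching points of $X$ can fragment preimages.

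For item~\eqref{lem:exact-rank:seconditem}: assume $L[a,\infty)\subset f(K)$ with $K$ an oscillatory quasi-arc of some order $m$. By Lemma~\ref{lem:img-qg}(2) applied to the oscillatory quasi-arc $L[a,\infty)$ (which has the same order $k$ as $L$), we get $k\le m$; I must upgrade this to $k=m$. If $m>k$, then $\omega(K)$ contains an oscillatory quasi-arc of every order $0,1,\dots,m-1$, in particular of order $k$; then Lemma~\ref{lem:img-qg}(2) gives $\omega(L)\subset f(\omega(K))$, and since $f(\omega(K))$ is strongly invariant-ish in the appropriate sense one can iterate to produce inside $\omega(L)$ an oscillatory quasi-arc of order $\ge k$, contradicting the maximality of $k$ in the definition of $k$-order. (Alternatively, once $K$ has order exactly $k$, Proposition~\ref{prop:k-order-OSC-arc} applies directly.) Having order $m=k$, Proposition~\ref{prop:k-order-OSC-arc} yields $f(\omega(K))=\omega(L)$ and $f(K[s,\infty))=L[r,\infty)$ for suitable $r,s$, which is precisely the statement that $f(K)$ and $L$ are eventually the same and $\omega(L)=f(\omega(K))$.

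For item~\eqref{lem:exact-rank:thirditem}: combine the first two items. By~\eqref{lem:exact-rank:firstitem} there is an oscillatory quasi-arc $K$ with $L[a,\infty)\subset f(K)$; by~\eqref{lem:exact-rank:seconditem}, $K$ has order exactly $k$ and $f(K)$ and $L$ are eventually the same, so $f(K)$ contains a tail $L[r,\infty)$, which is itself a $k$-order oscillatory quasi-arc. But I want $f(L)$, not $f(K)$. Here I would apply the surjectivity again together with~\eqref{lem:exact-rank:seconditem} in the form: whatever oscillatory quasi-arc maps onto a tail of $f(L)$ must have order exactly that of $f(L)$; iterating the construction, since there are only finitely many oscillatory quasi-arcs of each order in $X$ (as follows from Proposition~\ref{prop:arcs-in-QG} and the graph $G$ having finitely many branching points), the orders cannot strictly decrease forever along $f$, forcing $f(L)$ to contain an oscillatory quasi-arc of order exactly $k$. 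A cleaner route: take $K$ from~\eqref{lem:exact-rank:firstitem} applied to $L$ itself, note $f(K)\supset L[a,\infty)$ so $f$ maps the order-$k$ quasi-arc $K$ onto (eventually) $L$; since $K$ and $L$ have the same order $k$ and this holds for every $k$-order oscillatory quasi-arc, in particular $f$ restricted to the finite collection of $k$-order quasi-arcs permutes their ``tails'' up to eventual-sameness, so $f(L)$ is eventually the same as some $k$-order oscillatory quasi-arc, hence contains one. I expect item~\eqref{lem:exact-rank:thirditem} to be routine once~\eqref{lem:exact-rank:firstitem} and~\eqref{lem:exact-rank:seconditem} are in hand.
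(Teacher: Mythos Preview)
Your outline for \eqref{lem:exact-rank:firstitem} overcomplicates what is in fact a direct pigeonhole. Using the decomposition $X=G^*\cup\bigcup_{j=1}^n L_j$ from Theorem~\ref{thm:dec-qg}, pick $x_m\in f^{-1}(\varphi(m))$ by surjectivity; since $f(G^*)$ contains no oscillatory quasi-arc (Lemma~\ref{lem:img-qg}(1)), the $x_m$ lie in $\bigcup_j L_j$ for all large $m$, so infinitely many lie in a single $L_i$. By Proposition~\ref{prop:arcs-in-QG} the unique arc in $L_i$ between two such $x_m$'s maps onto the unique arc in $L$ between their images, giving $f(L_i)\supset L[a,\infty)$. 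No compactness extraction of an ``arcwise connected piece $Y$'' is needed; this is not where the difficulty lies.

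The genuine gap is in \eqref{lem:exact-rank:seconditem}. From $\omega(L)\subset f(\omega(K))$ you cannot ``iterate to produce inside $\omega(L)$ an oscillatory quasi-arc of order $\ge k$'': the inclusion points the wrong way to place anything new inside $\omega(L)$, and $f(\omega(K))$ has no invariance property available here, since at this stage you do not know what $f$ does to $K$. More fundamentally, to rule out a higher-order $K$ you would need to know that $f(K)$ is eventually the same as some quasi-arc of the \emph{same} order as $K$ --- but that is precisely the statement being proved, so the argument is circular unless organized as an induction. The paper runs a \emph{downward} induction on $k$ from the maximal order $m$ present in $X$: for $k=m$ the upper bound is trivial; once \eqref{lem:exact-rank:seconditem} is known for all orders $j>k$, one combines it with \eqref{lem:exact-rank:firstitem} and the finiteness of quasi-arcs modulo eventual-sameness (Proposition~\ref{prop:arcs-in-QG}) to see that $f$ permutes the order-$j$ tails, so $f(Q)$ for any order-$j$ quasi-arc $Q$ is eventually some $L_{\ell_1}$ of order $j$ and therefore cannot contain a tail of the order-$k$ arc $L$. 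Ironically this permutation argument is exactly your ``cleaner route'' for \eqref{lem:exact-rank:thirditem}; the missing insight is that it must be deployed \emph{inside} the inductive proof of \eqref{lem:exact-rank:seconditem}, not only afterward.

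Your final argument for \eqref{lem:exact-rank:thirditem} via the permutation on the finite set of $k$-order tails is correct and matches the paper.
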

\begin{proof}
\textbf{\eqref{lem:exact-rank:firstitem}} Since $f$ is surjective, there is a sequence of points $x_n\in X$
such that $f(x_n)=\varphi(n)$.
Write $X=G\cup \bigcup_{j=1}^N L_j$ as in Theorem~\ref{thm:dec-qg}.
By Lemma~\ref{lem:img-qg}, $f(G)$ does not contain oscillatory quasi-arcs and so there is $r_1>0$ such that
$f(G)\cap \varphi([r_1,\infty))=\emptyset$, in particular, $x_n\notin G$ for all sufficiently large $n$.
Similarly, there is $s_1>0$ such that
$G\cap \varphi([s_1,\infty))=\emptyset$.
But then we must have $x_n\in L_i$ for some $i$ and infinitely many $n$.
By Proposition~\ref{prop:arcs-in-QG}, there exits $a\geq 0$
such that $L[a,\infty)\subset f(L_i)$. This finishes the proof of \eqref{lem:exact-rank:firstitem}.

\textbf{\eqref{lem:exact-rank:seconditem}}
Now assume that
$K$ is an oscillatory quasi-arc in $X$
  such that $A=L[a, \infty)\subset f (K)$ for some $a\in \R_+$. In the following we shall prove that the order of $K$ is exactly $k$, and then obtain the conclusion by applying Proposition~\ref{prop:k-order-OSC-arc}.

By Lemma~\ref{lem:img-qg} the order of $K$
cannot be smaller than the order of $A$. Note that the quasi-arcs $A$ and $L$ are eventually the same and then they have the same order, thus the order of $K$ is at least $k$. It suffices to prove that the order of $K$ is at most $k$.

Denote by $m$ the maximal order among oscillatory quasi-arcs of $X$.
Observe that each oscillatory quasi-arc in $X$ must be eventually the same to $L_\ell$ for some $\ell\in \{1, \dots, N\}$ by Proposition~\ref{prop:arcs-in-QG},
and then we have $m\ge k$. Now let us prove the conclusion by induction.

First we consider the case of $k=m$.
Clearly the order of $K$ is at most $k$ by the definition of $m$, as by Proposition~\ref{prop:arcs-in-QG} each oscillatory quasi-arc in $X$ has its order at most $m$.

Next let $n\geq 0$ be such that the result holds for
all quasi-arcs in $X$ with its order among $m-n,m-n+1,\ldots,m$
and assume that $k=m-n-1\geq 1$.

Fix any $j\in \{m-n,\ldots,m\}$ and any oscillatory quasi-arc $Q$ in $X$ with its order $j$. Now it suffices to prove $L\not\subset f(Q)$.
By \eqref{lem:exact-rank:firstitem} and our inductive assumptions, using Proposition \ref{prop:k-order-OSC-arc} we can construct in $X$ a finite sequence of oscillatory quasi-arcs $A_{-N -1}, A_{- N}, \dots, A_{- 1}, A_0$ of order $j$ such that $f(A_i)= A_{i+1}$ for all $i= -N- 1, -N, \dots, -1$ and $A_0=Q[r_0,\infty)$ for some $r_0> 0$. But by Proposition~\ref{prop:arcs-in-QG} each oscillatory quasi-arc in $X$ must be eventually the same to $L_\ell$ for some $\ell\in \{1, \dots, N\}$, hence we must have that $A_p$ and $A_q$ are eventually the same
for some indexes $p<q$. Applying Proposition~\ref{prop:k-order-OSC-arc} again we obtain that oscillatory quasi-arcs $A_{p+1}$ and $A_{q+1}$, as images of oscillatory quasi-arcs $A_p$ and $A_q$ respectively, are eventually the same, and then by induction, there is $t>0$ such that $A_{-t}$ and $A_0$ are eventually the same.
Note that $f(A_{-t})=A_{-t+1}$ and so
there exists $r_1\in\R_+$ such that $f(Q[r_1,\infty))$ and $A_{-t+1}$ are eventually the same. But then $f(Q[r_1,\infty))$
 is eventually
the same to $L_{\ell_1}$ for some $\ell_1\in\{1,\dotsc,N\}$, and $L_{\ell_1}$ has order $j$.
As $L$ has order $k$ and $\{L_i\}_{i=1}^N$ are pairwise disjoint,
$L$ is  eventually
the same to $L_{\ell_2}$ for some $\ell_2\in\{1,\dotsc,N\}\setminus\{\ell_1\}$.
Then by Proposition~\ref{prop:k-order-OSC-arc}, $L\not\subset f(Q[r,\infty))$ for any $r\in\R_+$.
This proves that $K$ must have order $k$.

\textbf{\eqref{lem:exact-rank:thirditem}}
By (1) and (2), for any $k$-order oscillatory quasi-arc $A$ in $X$,
there exists a $k$-order oscillatory quasi-arc $B$ in $X$
such that $f(B)$ and $A$ are eventually the same.
Note that there are only finitely many $k$-order oscillatory quasi-arcs
in $X$ up to identification of eventually the same oscillatory quasi-arcs.
Therefore, for each $k$-order oscillatory quasi-arc in $X$,
its image under $f$ must contain a $k$-order oscillatory quasi-arc.
\end{proof}

\subsection{Invariant measures and topological entropy for quasi-graph maps} \label{property:qg}
Now we are ready to
show  that every invariant measure of a map acting on a
quasi-graph is isomorphic to a finite convex combination of invariant measures on graphs.
Recall that for a dynamical system $(X,f)$ a point $x\in X$ is \textit{recurrent} if $\liminf_{n\to\infty}d(f^nx,x)=0$.

\begin{lem}\label{lem:division}
Let $X$ be a quasi-graph, $f\colon X\to X$ be a continuous map, and $k>1$ be the maximal order among oscillatory quasi-arcs of $X$.
Then there exists a quasi-graph map $(Y,f_1)$ and a graph map $(G,f_2)$ such that
\begin{enumerate}
\item[(1)]  the maximal order among oscillatory quasi-arcs of $Y$ is at most $k- 1$, and

\item[(2)] if the system $(X, f)$ has zero topological entropy then both $(Y,f_1)$ and $(G,f_2)$ have zero topological entropy.
\end{enumerate}
 Furthermore, if $\mu$ is an invariant measure of $(X,f)$, then
there exist:
\begin{enumerate}
	\item[(3)] invariant measures $\mu_1,\mu_2$ on $(X,f)$ and $\alpha\in[0,1]$ such that $\mu=\alpha \mu_1+(1-\alpha)\mu_2$, additionally, if $\alpha\in (0,1)$ then $\mu_1$ and $\mu_2$ are singular;
	\item[(4)] an invariant measure $\nu_1$ of $(Y,f_1)$ such that $(X, \mu_1, f)$
	and $(Y,\nu_1,f_1)$ are measure-theoretically isomorphic; and
	\item[(5)] an invariant measure $\nu_2$ of $(G,f_2)$ such that $(X, \mu_2, f)$
	and $(G,\nu_2,f_2)$ are measure-theoretically isomorphic once $\alpha\in [0, 1)$.
	\end{enumerate}
\end{lem}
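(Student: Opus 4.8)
The plan is to split the quasi-graph $X$ into two pieces according to the maximal-order oscillatory quasi-arcs. Write $X=G_0\cup\bigcup_{j=1}^N L_j$ as in Theorem~\ref{thm:dec-qg}, and let $\mathcal{L}$ be the (finite, by Proposition~\ref{prop:arcs-in-QG}) collection of those $L_j$ having order exactly $k$, i.e. the top order. The set $Z=\bigcup_{L\in\mathcal{L}} \overline{L}$ consists of the closures of the top-order quasi-arcs together with their $\omega$-limit sets, which by Theorem~\ref{thm:dec-qg}(3) live in $G_0\cup\bigcup_{j<}\,L_j$; I would let $Y$ be obtained from $X$ by collapsing, or rather deleting, these top-order quasi-arcs down to their endpoints $a_i$, so that $Y$ is again a quasi-graph whose oscillatory quasi-arcs all have order $\le k-1$ — this is (1). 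Dually, $G$ should be a graph onto which the "top-order part" of the dynamics projects; the natural candidate is the graph $G_0$ from the decomposition (recall $End(X)\cup Br(X)\subset G_0$), with $f_2$ the first-point-map conjugate of $f$ restricted appropriately. The maps $f_1$ and $f_2$ are then the induced maps; continuity and the quasi-graph/graph structure follow from Lemma~\ref{lem:img-qg} and Proposition~\ref{prop:k-order-OSC-arc}, which guarantee that $f$ permutes the top-order quasi-arcs (modulo eventual-sameness) and maps the rest into the lower-order part. Claim (2) is then immediate: entropy does not increase under taking factors or invariant subsystems, so zero entropy of $(X,f)$ forces zero entropy of both $(Y,f_1)$ and $(G,f_2)$.

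For the measure-theoretic part, given $\mu\in M_f(X)$ I would look at the set $A=\bigcup_{n\ge 0} f^{-n}(Z\setminus\{a_i : L_i\in\mathcal{L}\})$ — heuristically, the set of points whose orbit eventually enters (and by Lemma~\ref{lem:exact-rank}, stays governed by) a top-order quasi-arc. By Lemma~\ref{lem:exact-rank}\eqref{lem:exact-rank:seconditem}–\eqref{lem:exact-rank:thirditem}, $f$ maps top-order quasi-arcs onto pieces eventually the same as top-order quasi-arcs, so $A$ is Borel and $f$-invariant; set $\alpha=\mu(A^c)$ where $A^c=X\setminus A$, and define $\mu_1=\frac1\alpha\,\mu|_{A^c}$ and $\mu_2=\frac1{1-\alpha}\,\mu|_{A}$ (with the obvious conventions if $\alpha\in\{0,1\}$). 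Then $\mu=\alpha\mu_1+(1-\alpha)\mu_2$ and, when $\alpha\in(0,1)$, the two measures are carried by disjoint invariant Borel sets, hence mutually singular; this is (3). For (4): $\mu_1$ is supported off the top-order quasi-arcs, so the collapsing map $\pi_1\colon X\to Y$ is injective on $\supp(\mu_1)$ up to a $\mu_1$-null set (the only identifications $\pi_1$ makes are within the arcs $\overline{L}$, $L\in\mathcal{L}$, and those lie in $A$), so $\nu_1=(\pi_1)_*\mu_1$ gives a measure-theoretic isomorphism $(X,\mu_1,f)\cong(Y,\nu_1,f_1)$ — here I would invoke Lemma~\ref{lem:factor-maps} with $\pi=\pi_1$, checking that $\Delta_X$ has full measure for every invariant measure of $(R_{\pi_1},f\times f)$ supported over $\supp(\mu_1)$. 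For (5): on $A$ the orbit is eventually shadowed by the top-order quasi-arcs, and Proposition~\ref{prop:k-order-OSC-arc} says $f$ on (an eventual tail of) such a quasi-arc $K$ determines $f$ on $\omega(K)\subset G_0$; the first-point retraction $\pi_2\colon X\to G_0$ composed with this identification conjugates $(A,\mu_2,f)$ to $(G,\nu_2,f_2)$ with $\nu_2=(\pi_2)_*\mu_2$, again via Lemma~\ref{lem:factor-maps} — the point is that a top-order quasi-arc and its $\omega$-limit set carry "the same" measure because the quasi-arc itself is a wandering copy of a tail, so $\mu_2$ must be concentrated on $\omega$-limit behavior inside $G_0$.

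The main obstacle I anticipate is (5), specifically verifying that an invariant measure sitting on the top-order part $A$ genuinely "descends" to the graph $G$ without losing or duplicating mass. An invariant measure cannot charge a wandering set, and by Lemma~\ref{lem:max}(4) (transported to the quasi-graph setting via the order reduction) the non-degenerate pieces are wandering, so $\mu_2$ is forced onto the $\omega$-limit structure; the delicate bookkeeping is to show that the finitely many top-order quasi-arcs, which $f$ cyclically permutes modulo eventual-sameness, contribute to $\mu_2$ only through their common $\omega$-limit set lying in $G_0$, so that the pushforward under the retraction is invariant for $f_2$ and the fibers of $\pi_2$ over $\supp(\nu_2)$ are $\mu_2$-a.s. singletons. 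Once that is in place, both isomorphisms are delivered uniformly by Lemma~\ref{lem:factor-maps}, and the lemma is proved. A minor technical point to handle carefully is the degenerate cases $\alpha=0$ and $\alpha=1$, where one of $\mu_1,\mu_2$ is undefined and the corresponding clause is vacuous — the statement is phrased to accommodate this ("once $\alpha\in[0,1)$" in (5)), so no extra work beyond noting it.
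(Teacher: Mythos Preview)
Your construction has the two pieces inverted, and this leads to a concrete false step in (5). You assert that for a top-order quasi-arc $K$ one has $\omega(K)\subset G_0$; but the hypothesis $k>1$ rules this out. By definition, the $\omega$-limit set of a $k$-order oscillatory quasi-arc contains a $(k-1)$-order oscillatory quasi-arc, while the graph $G_0$ contains no oscillatory quasi-arcs whatsoever. So $\omega(K)\not\subset G_0$, and there is no way to push $\mu_2$ down to the graph $G_0$ as you describe. A related problem affects your $Y$: Lemma~\ref{lem:img-qg} and Lemma~\ref{lem:exact-rank} only control images of \emph{tails} of quasi-arcs; nothing prevents $f$ from sending a piece of $G_0$ or of a lower-order quasi-arc into a compact portion of a top-order $L_i$, so the set you obtain by deleting the top-order arcs is not $f$-invariant and carries no induced map $f_1$. (There is also no ``first-point retraction'' $X\to G_0$ for general quasi-graphs; that device is specific to dendrites.)

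The paper runs the decomposition the other way. The invariant set is $Q=\bigcup_{j=1}^{n}\omega(L_j)$, the union of the $\omega$-limit sets of \emph{all} the oscillatory quasi-arcs; Lemma~\ref{lem:exact-rank} gives $f(Q)=Q$, and the quasi-arcs inside $Q$ automatically have order at most $k-1$. The quasi-graph $Y$ is $Q$ with a finite star of arcs attached (through a new fixed point) to make it arcwise connected, and $f_1$ extends $f|_Q$. The graph $G$ is the quotient $X/{\sim}$ collapsing all of $Q$ to a single point, with $f_2$ the induced map. Setting $\alpha=\mu(Q)$, one takes $\mu_1=\tfrac{1}{\alpha}\mu|_Q$ and $\mu_2=\tfrac{1}{1-\alpha}\mu|_{X\setminus Q}$; the isomorphisms in (4) and (5) are then immediate, since $f_1|_Q=f|_Q$ and the quotient map $\pi\colon X\to G$ is injective on $X\setminus Q$.
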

\begin{proof}
Since we are dealing with invariant measures, by Proposition~\ref{prop:quasi-graph-onto}
we may assume that $f$ is surjective (replacing $X$ by $\bigcap_{n=0}^\infty f^n(X)$ if necessary).
Fix a presentation $X=G^*\cup \bigcup_{j=1}^n L_j$
provided by Theorem~\ref{thm:dec-qg}.
As each oscillatory quasi-arc in $X$ must be eventually the same to $L_l$ for some $l\in \{1, \dots, n\}$, by Lemma \ref{lem:exact-rank} we know that the closed set $Q:=\bigcup_{j=1}^n\omega(L_j)$
is $f$-invariant, i.e., $f(Q)= Q$.

\smallskip

\noindent\textbf{Claim.}
There is a quasi-graph $Y$ obtained by adding some arcs to $Q$
(and so $Y\supset Q$)
and a continuous surjection $f_1\colon Y\to Y$ such that
$f|_{Q}=f_1|_{Q}$.
\begin{proof}[Proof of Claim]
	By \cite[Proposition~2.31]{MS17}, every compact connected set
	in $X$ has at most $n+1$ arcwise connected components.
	Each $\omega(L_j)$ is connected, then $Q$ has finitely many
	arcwise connected components, which we enumerate as $X_0,X_1,\dotsc,X_{m-1}$.
	Note that the continuous image of every arcwise connected set is
	itself arcwise connected, therefore
	by the condition $f(Q)=Q$, there is a permutation $\tau$ on $\{0,1,\dotsc,m-1\}$
	such that $f(X_i)=X_{\tau(i)}$ for all $i=0,1,\dotsc,m-1$.
	Assume first that $f(X_i)=X_{i+1\pmod m}$.
	Fix a point $x\in X_0$. Then $f^i(x)\in X_{i}$ for all $i=1,2,\dotsc,m-1$
	and $f^m(x)\in X_0$.
	We construct $Y$ in the following way:
	add to $X_0$ an exterior point $z$ and then connect $z$ with $f^i(x)$
	by an arc $J_i$ for every $i=0,1,\dotsc,m-1$ in such a way
	that $J_i\cap J_j=\{z\}$ and $J_i\cap X_i=\{f^i(x)\}$.
	It is clear that $Y$ is arcwise connected and then a quasi-graph.
	We define a map $f_1\colon Y\to Y$ as follows.
	First we define $f_1$ as $f$ on $Q$.
	For $i=0,1,\dotsc,m-2$, $f_1$ maps
	the arc $J_i$ homeomorphically onto the arc $J_{i+1}$
	with $f_1(z)=z$ and $f_1(f^i(x))=f^{i+1}(x)$.
	If $f^{m}(x)=x$, $f_1$ maps
	the arc $J_{m-1}$ homeomorphically onto the arc $J_{0}$
	with $f_1(z)=z$ and $f_1(f^{m-1}(x))=x$.
	If $f^m(x)\neq x$, as $X_0$ is arcwise connected,
	we pick an arc $K\subset X_0$ with endpoints $x$ and $f^m(x)$.
	Then we define $f_1$ to map the arc $J_{m-1}$
	homeomorphically onto the arc $J_0\cup K$ in such a way
	that $f_1(z)=z$ and $f_1(f^{m-1}(x))=f^m(x)$.
	Then $f_1$ is well-defined and continuous.
	Since we can arrange the above homeomorphisms arbitrarily, we can require that $f_1$ is strictly monotone on each $J_i$,
	that is, if $x_*,f^m(x_*)\in Y\setminus Q$ and $x_*\neq z$ then $x_*\neq f^m(x_*)$ and the shortest arc $[z,x_*]$ connecting $z,x_*$ is contained in the arc $[z,f^m(x_*)]$.
	In particular, $z$ is the only recurrent point in $Y\setminus Q$.
	
	If $\tau$ has more than one cycle, we construct an appropriate quasi-graph for each cycle independently, and then combine them into one quasi-graph by identifying the fixed point $z$ in all of these independent quasi-graphs.
\end{proof}

Next we define a relation $\sim$ on $X$ by putting
$a \sim b$ provided that $a=b$ or
$a\in \omega(L_i)$ and $b\in \omega(L_j)$ for some indexes $i,j$.
The relation $\sim$ is clearly a closed equivalence relation.
Let $G=X/_\sim$ and $\pi \colon X\to G$ be the associated quotient mapping.
As the relation $\sim$ is invariant under $f\times f$,
we can naturally define a continuous map $f_2=f/_\sim$ on $G=X/_\sim$
such that $\pi\circ f=f_2\circ \pi$.
Since we collapsed $\omega$-limit sets of all oscillating quasi-arcs of $X$
to a single point, it is not hard to see that
$G$ is a graph.

Recall that in the proof of Claim, we attached to $Q$ a finite number of arcs,
obtaining the quasi-graph $Y$ that way.
So the maximal order of quasi-arcs in $Y$ is at most $k-1$.
Moreover, it is easy to see from the above construction that if $(X, f)$ has zero topological entropy then both $(Y,f_1)$ and $(G,f_2)$ have zero topological entropy. Furthermore, any invariant measure supported on $Q$ can be regarded as an $f_1$-invariant measure on $Y$ (by the condition $f|_{Q}=f_1|_{Q}$) and any invariant measure supported on $X\setminus Q$ can be regarded as an $f_2$-invariant measure on $G$ via the factor map $\pi$ (by the definition $\pi|_{X\setminus Q}$ is a one-to-one map).

Now assume that $\mu$ is an invariant measure of $(X,f)$, and denote $\alpha=\mu(Q)$.
If $\alpha= 1$, then set $\mu_1= \mu= \mu_2$.
If $0<\alpha<1$, then set $\mu_1= \frac{1}{\alpha} \mu|_{Q}$
and $\mu_2= \frac{1}{1-\alpha} \mu|_{X\setminus Q}$.
If $\alpha=0$, then let $\mu_1$ be any invariant
measure supported on $Q$ and set $\mu_2=\mu$.
Then the invariant measures $\nu_1$ and $\nu_2$ are defined naturally. It is easy to show that these invariant measures $\mu_1, \mu_2, \nu_1, \nu_2$ satisfy the required properties.
\end{proof}

If we inductively apply Lemma \ref{lem:division}
to the maximal order of oscillatory quasi-arcs of a quasi-graph map,
then we have the following main result of this section.

\begin{thm}\label{thm:quasi-graph-invarn-measure}
Let $X$ be a quasi-graph and let $f\colon X\to X$ be a continuous map.
Then there exist graph maps $(G_1,f_1), \ldots, (G_k, f_k)$ for some $k\in \mathbb{N}$ such that
\begin{enumerate}

\item  each invariant measure on $(X,f)$ is measure-theoretically
isomorphic to a finite convex combination of invariant measures
on these graph maps,

\item each ergodic invariant measure on $(X,f)$ is measure-theoretically
isomorphic to an ergodic invariant measure on $(G_i, f_i)$ for some $i= 1, \ldots, k$, and

\item
 if the system $(X, f)$ has zero topological entropy then all $(G_1,f_1), \ldots, (G_k, f_k)$ have zero topological entropy.
\end{enumerate}
\end{thm}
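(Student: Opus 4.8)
The plan is to induct on $k$, the maximal order among oscillatory quasi-arcs of $X$, using Lemma~\ref{lem:division} as the inductive step. First, as in the proof of Lemma~\ref{lem:division}, we may replace $X$ by $\bigcap_{n=0}^\infty f^n(X)$ (a quasi-graph by Proposition~\ref{prop:quasi-graph-onto}, unless it is degenerate, in which case the orbit of every point is eventually a single fixed point and the conclusion is trivial with a one-point graph), so that $f$ is surjective; this is harmless because we only care about invariant measures and $M_f(X)=M_f(\bigcap_n f^n(X))$. The base case is $k=0$: by Theorem~\ref{thm:dec-qg} a quasi-graph with no oscillatory quasi-arcs is itself a graph, so we take the single pair $(G_1,f_1)=(X,f)$ and all three conclusions are immediate.

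For the inductive step, suppose $k\ge 1$ and the statement holds for all quasi-graph maps whose maximal oscillatory order is at most $k-1$. Apply Lemma~\ref{lem:division} to $(X,f)$: it produces a quasi-graph map $(Y,f_1)$ with maximal oscillatory order at most $k-1$ and a graph map $(G,f_2)$. By the inductive hypothesis applied to $(Y,f_1)$, there are graph maps $(G_1^Y,g_1),\dots,(G_m^Y,g_m)$ realizing (1)--(3) for $(Y,f_1)$; together with $(G,f_2)$ these form the list $(G_1,f_1),\dots,(G_k,f_k)$ (renaming indices) for $(X,f)$. Now given an invariant measure $\mu$ of $(X,f)$, Lemma~\ref{lem:division}(3) writes $\mu=\alpha\mu_1+(1-\alpha)\mu_2$ with $(X,\mu_1,f)$ isomorphic to an invariant measure $(Y,\nu_1,f_1)$ and, when $\alpha<1$, $(X,\mu_2,f)$ isomorphic to $(G,\nu_2,f_2)$; by the inductive hypothesis $\nu_1$ is isomorphic to a finite convex combination of invariant measures on the $(G_i^Y,g_i)$, so $\mu$ is isomorphic to a finite convex combination of invariant measures on the graph maps in our list, giving (1). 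For (2), if $\mu$ is moreover ergodic then in the decomposition $\mu=\alpha\mu_1+(1-\alpha)\mu_2$ one of the summands has weight $1$ (since for $\alpha\in(0,1)$ the two parts $\mu_1,\mu_2$ are mutually singular and hence $\mu$ would not be ergodic); thus either $\mu$ is isomorphic to the ergodic measure $\nu_1$ on $(Y,f_1)$ — and then, by the inductive hypothesis, to an ergodic invariant measure on some $(G_i^Y,g_i)$ — or $\mu$ is isomorphic to the ergodic measure $\nu_2$ on the graph map $(G,f_2)$. In either case $\mu$ is isomorphic to an ergodic invariant measure on one of the graph maps in the list.

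Finally, (3) follows by tracking zero topological entropy through the induction: Lemma~\ref{lem:division}(2) guarantees that if $(X,f)$ has zero entropy then so do $(Y,f_1)$ and $(G,f_2)$, and then the inductive hypothesis gives zero entropy for each $(G_i^Y,g_i)$; hence all graph maps in the list have zero topological entropy. The induction terminates after exactly $k$ steps, which bounds the number of graph maps produced (each step contributes one graph map $(G,f_2)$, and the terminal base case contributes one more), so $k$ — equivalently the number of pairs — is finite as required.

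The only real point requiring care is the measure-theoretic bookkeeping: one must check that the isomorphism in Lemma~\ref{lem:division}(4) carries the whole simplex $M_{f_1}(Y)$ faithfully enough that applying the inductive decomposition to $\nu_1$ and pulling back yields a genuine convex combination isomorphic to $\mu$ (rather than merely to $\mu_1$). This is routine once one notes that measure-theoretic isomorphism respects convex combinations and ergodic decompositions, but it is the step where the statement could be misapplied if one is careless about which measure is being decomposed. Everything else is a direct unwinding of Lemma~\ref{lem:division}.
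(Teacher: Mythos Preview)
Your proof is correct and follows exactly the approach the paper sketches: induct on the maximal order of oscillatory quasi-arcs, using Lemma~\ref{lem:division} at each step to peel off one graph map $(G,f_2)$ and pass to a quasi-graph $(Y,f_1)$ of strictly lower order. The paper's own proof is a single sentence to this effect, so your write-up is in fact considerably more detailed than the original --- in particular your handling of the ergodic case via the mutual singularity clause in Lemma~\ref{lem:division}(3), and your remark about pulling convex decompositions back through the isomorphism in (4), make explicit the bookkeeping the paper leaves to the reader.
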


It is clear (cf. Theorem~\ref{thm:discrete}),
a finite convex combination of invariant measures which have discrete spectrum also has discrete spectrum.

\begin{proof}[Proof of Theorem~\ref{thm:quasi-graph-dis-spect}]
It is enough to combine Theorems~\ref{thm:graph-map-dis-spect} and \ref{thm:quasi-graph-invarn-measure}.
\end{proof}

%

Let $X$ be a quasi-graph and let
$s\geq 2$.
An \textit{$s$-horseshoe} for $f\colon X\to X$ is a closed arc $I \subset X$ and
closed subarcs $J_1,\ldots,J_s\subset I$ with pairwise disjoint interiors,
such that $f(J_j) = I$ for all $j = 1,\ldots ,s$.
We shall denote this horseshoe by $(I;J_1,\dotsc,J_s)$.
An $s$-horseshoe is \textit{strong} if in addition the intervals $J_j$ are contained in the interior of $I$ and are pairwise disjoint.

Llibre and Misiurewicz proved the following result relating
positive topological entropy and the existence of horseshoes on graph maps (cf. \cite[Theorem B]{LM93}).

\begin{thm} \label{thm:LM93}
Let $G$ be a graph and let $f\colon G\to G$ be a continuous map.
Assume $\htop(f)>0$, where $\htop (f)$ denotes the topological entropy of $(G, f)$. Then there exist strictly increasing sequences $s_n,k_n$
of positive integers such that each $f^{k_n}$ has an $s_n$-horseshoe and
$\lim_{n\to\infty}\frac{1}{k_n}\log (s_n)=\htop(f)$.
\end{thm}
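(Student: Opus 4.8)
The plan is to establish separately the two inequalities behind $\lim_n\tfrac1{k_n}\log s_n=\htop(f)$. One of them is standard and requires no construction: any $s$-horseshoe $(I;J_1,\dots,J_s)$ for $f^{k}$ forces $\htop(f^{k})\ge\log s$, since the full one-sided shift on $s$ symbols is a factor of the subsystem $(\Lambda,f^{k})$ with $\Lambda=\bigcap_{m\ge 0}f^{-km}(J_1\cup\dots\cup J_s)$ (the itinerary map is onto because $f^{k}(J_j)=I$, by a nested-intersection argument); hence $\tfrac1k\log s\le\htop(f)$, so $\limsup_n\tfrac1{k_n}\log s_n\le\htop(f)$ automatically. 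Thus the real content is the opposite direction: I would show that for every real number $A<\htop(f)$ there exists an iterate $f^{k}$ carrying an $s$-horseshoe with $\tfrac1k\log s>A$. Granting this, fix an increasing sequence $A_n\uparrow\htop(f)$ and, at stage $n$, choose an iterate $k_n>k_{n-1}$ (one may always pass to a higher iterate without harming the ratio) together with a horseshoe of size $s_n$ satisfying $\tfrac1{k_n}\log s_n>A_n$; passing to a subsequence so that $(s_n)$ is strictly increasing (possible since $s_n\ge e^{k_nA_n}$ and $k_nA_n\to\infty$) and combining with the standard inequality yields $\lim_n\tfrac1{k_n}\log s_n=\htop(f)$.

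For the lower bound I would pass to a finite combinatorial model that already carries almost all the entropy. The substantial input here — which is the heart of the argument in \cite{LM93}, and the graph version of the Misiurewicz--Szlenk ``number of laps versus entropy'' circle of ideas — is that for every real $A<\htop(f)$ there exist an iterate $f^{k_0}$, finitely many closed subarcs $I_1,\dots,I_r$ of $G$, each lying in a single edge and pairwise of disjoint interiors, on each of which $f^{k_0}$ is injective, and the $0$--$1$ matrix $M=(m_{uv})$ with $m_{uv}=1\iff f^{k_0}(I_u)\supseteq I_v$, such that $\tfrac1{k_0}\log\rho(M)>A$, where $\rho(M)$ is the spectral radius of $M$. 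Replacing $M$ by an irreducible component realising $\rho(M)$ and invoking Perron--Frobenius theory, one obtains an index $i$ and a constant $c>0$ with $(M^{m})_{ii}\ge c\,\rho(M)^{m}$ for all large $m$.

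The final step, which is soft, converts loops of the model into genuine horseshoes. Fixing a large $m$ and writing $k=k_0m$, $I=I_i$, $s=(M^{m})_{ii}$, each of the $s$ directed loops $i=v_0\to v_1\to\dots\to v_m=i$ produces, by a backward induction that at stage $j$ takes the preimage under the homeomorphism $f^{k_0}|_{I_{v_j}}$ of the subarc already built inside $I_{v_{j+1}}$ (legitimate because $I_{v_{j+1}}\subseteq f^{k_0}(I_{v_j})$), a closed subarc $J\subseteq I$ with $f^{k_0 j}(J)\subseteq I_{v_j}$ for all $j$ and $f^{k}(J)=I$. Two distinct loops first disagree at some index, where the corresponding pieces lie in two model arcs of disjoint interiors, and since $f^{k_0}$ is injective on every $I_v$ the pulled-back subarcs $J$ inside $I$ also have disjoint interiors; hence $(I;J_1,\dots,J_s)$ is an $s$-horseshoe for $f^{k}$, with $\tfrac1k\log s\ge\tfrac1{k_0m}\bigl(\log c+m\log\rho(M)\bigr)\to\tfrac1{k_0}\log\rho(M)>A$ as $m\to\infty$, so $\tfrac1k\log s>A$ for $m$ large. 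The genuine obstacle is not this extraction but the combinatorial-model input above: bounding $\htop(f)$ from below by finite monotone-covering data on a merely continuous graph map — coping with branch points and with the absence of any global piecewise-monotone structure — is where the real difficulty lies, and this is why we quote Theorem~\ref{thm:LM93} from \cite{LM93} rather than reprove it.
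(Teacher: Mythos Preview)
The paper does not supply its own proof of Theorem~\ref{thm:LM93}: it is quoted verbatim as Theorem~B of Llibre--Misiurewicz \cite{LM93} and used as a black box in the proof of Theorem~\ref{thm:hors}. There is therefore no ``paper's own proof'' to compare against; you have correctly identified this at the end of your proposal when you write that the combinatorial-model input is the real difficulty and ``this is why we quote Theorem~\ref{thm:LM93} from \cite{LM93} rather than reprove it.''

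As a sketch of how the Llibre--Misiurewicz argument runs, your outline is broadly faithful: the easy direction $\tfrac1k\log s\le\htop(f)$ via the shift factor, the reduction to a finite covering model with transition matrix $M$ satisfying $\tfrac1{k_0}\log\rho(M)>A$, and the extraction of an $s$-horseshoe from loops in the covering graph are all standard moves in this circle of ideas. Your honest admission that the substantive step --- producing the monotone covering model on a general graph, with branch points and no piecewise-monotone hypothesis --- is precisely what \cite{LM93} provides is the right assessment. Since the present paper treats Theorem~\ref{thm:LM93} purely as a citation, your write-up is appropriate in spirit; if anything, the sketch preceding your final sentence is more than the paper itself offers.
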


We show that this result also holds for quasi-graph maps.

\begin{thm}\label{thm:hors}
Let $X$ be a quasi-graph and let $f\colon X\to X$ be a continuous map.
Assume $\htop(f)$ $> 0$. Then there exist strictly increasing sequences $s_n,k_n$
of positive integers such that each $f^{k_n}$ has an $s_n$-horseshoe and
$\lim_{n\to\infty}\frac{1}{k_n}\log (s_n)=\htop(f)$.
\end{thm}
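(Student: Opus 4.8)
The plan is to reduce Theorem~\ref{thm:hors} to the graph case (Theorem~\ref{thm:LM93}) by passing to the graph factor $(G,f_2)$ produced in the proof of Lemma~\ref{lem:division}, and then to pull horseshoes back from $G$ to $X$. First I would recall that by Proposition~\ref{prop:quasi-graph-onto} we may assume $f$ is surjective, replacing $X$ by $\bigcap_{n\ge 0}f^n(X)$; this does not change $\htop(f)$ since the non-wandering set, and in fact the whole dynamically relevant part, lies in this intersection. With $f$ surjective, I would invoke the construction in Lemma~\ref{lem:division}: iterating it on the maximal order of oscillatory quasi-arcs yields (as in Theorem~\ref{thm:quasi-graph-invarn-measure}) a finite sequence of graph maps $(G_1,f_1),\dots,(G_k,f_k)$ together with the relevant factor maps and restrictions, and $\htop(f)=\max_i \htop(f_i)$. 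The inequality $\htop(f)\ge \max_i\htop(f_i)$ is clear because each $(G_i,f_i)$ arises either as a factor of $(X,f)$ (the collapse map $\pi$) or as a restriction of $(Y,f_1)$-type systems which are in turn built from $X$; for the reverse inequality one uses that the ``extra'' dynamics added in the Claim of Lemma~\ref{lem:division} (the arcs $J_i$ carrying a monotone map with a single recurrent point $z$) contributes zero entropy, and that collapsing the $\omega$-limit sets of the quasi-arcs to points can only be compensated by the graph factor, so $\htop(f)\le \max_i\htop(f_i)$.

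Next, since $\htop(f)>0$, there is an index $i_0$ with $\htop(f_{i_0})=\htop(f)>0$. Apply Theorem~\ref{thm:LM93} to the graph map $(G_{i_0},f_{i_0})$ to obtain strictly increasing sequences $s_n,k_n$ with $f_{i_0}^{k_n}$ having an $s_n$-horseshoe $(I^{(n)};J_1^{(n)},\dots,J_{s_n}^{(n)})$ in $G_{i_0}$ and $\frac{1}{k_n}\log s_n\to\htop(f)$. The key remaining step is to transfer each such horseshoe into $X$. If $G_{i_0}$ is the collapse graph $G=X/_\sim$ with factor $\pi\colon X\to G$, I would use monotonicity of $\pi$: an arc $I\subset G$ lifts through the monotone surjection $\pi$ to an arc $\tilde I\subset X$ with $\pi(\tilde I)=I$, and similarly the subarcs $J_j$ lift to subarcs $\tilde J_j\subset \tilde I$ with pairwise disjoint interiors. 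From $f^{k_n}(\tilde J_j)\supset$ (something mapping onto $I$ under $\pi$) one extracts, using arcwise connectedness, subarcs of $\tilde J_j$ that are genuinely mapped \emph{onto} an arc in $X$; since all branching behaviour of $X$ sits inside the graph part, these lifted configurations form honest horseshoes in $X$ for $f^{k_n}$ with the same multiplicities $s_n$. If instead $G_{i_0}$ arises as a restriction/quasi-graph piece $(Y,f_1)$ whose graph part sits inside $X$ up to the added arcs, the horseshoe already lives on an arc that is (a homeomorphic copy of) an arc in $X$, and one simply carries it back directly, possibly shrinking the base arc to avoid the artificially added arcs $J_i$.

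I expect the main obstacle to be the entropy bookkeeping in the reduction, namely verifying cleanly that $\htop(f)=\max_i\htop(f_i)$ through the inductive application of Lemma~\ref{lem:division}; in particular one must check that neither collapsing the sets $\omega(L_j)$ (passing to the factor $G$) nor adjoining the low-complexity arcs (passing to $Y$) changes the topological entropy, which requires a short argument that the adjoined arcs carry a map conjugate to a monotone interval map (hence zero entropy) and that the factor map $\pi$ is at most finite-to-one off a zero-entropy part, so entropy is preserved. A secondary technical point is the horseshoe lift: one needs that a monotone surjection between continua (here $\pi$ restricted to a suitable arc) admits an arc lifting of arcs and carries the ``$f(J_j)=I$'' relation back — this is where Proposition~\ref{prop:arcs-in-QG} is used, to guarantee that the preimage arcs are well-behaved and that the finitely many arcs between two points in $X$ do not cause the lifted horseshoe to degenerate. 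Once these two points are settled, the sequences $s_n,k_n$ obtained from Theorem~\ref{thm:LM93} work verbatim for $f$, completing the proof.
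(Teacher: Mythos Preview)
Your overall plan---induct on the maximal order of oscillatory quasi-arcs via Lemma~\ref{lem:division}, reduce to the graph case, and pull horseshoes back---is exactly the paper's strategy. However, the step you flag as ``secondary'' (the horseshoe lift) is where the real gap lies, and your proposed mechanism for it does not work.

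The collapse map $\pi\colon X\to G$ is \emph{not} monotone: it sends all of $Q=\bigcup_j\omega(L_j)$ to a single point $p$, and $Q$ need not even be connected (the Claim in Lemma~\ref{lem:division} only says it has finitely many arcwise connected components). Even when $Q$ is connected, $\pi^{-1}$ of an arc through $p$ contains all of $Q$, which may contain entire oscillatory quasi-arcs; so there is in general no arc $\tilde I\subset X$ with $\pi(\tilde I)=I$, and your ``monotone lift'' does not produce a horseshoe in $X$. The paper's remedy is different and is the key trick: given an $s$-horseshoe $(I;J_1,\dots,J_s)$ for $g^k$ in $G$, at most two of the $J_i$ contain $p$; discard them and shorten $I$ accordingly to get an $r$-horseshoe $(I';J_1',\dots,J_r')$ with $r\ge (s-2)/2$ and $p\notin I'$. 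Now $\pi|_{X\setminus Q}\colon X\setminus Q\to G\setminus\{p\}$ is a bijection, so $(I';J_1',\dots,J_r')$ \emph{is} already a horseshoe for $f^k$ in $X$. The factor-of-two loss is harmless once $k$ is large.

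For the $(Y,f_1)$ branch you also need more than ``shrink to avoid the added arcs''. The paper shows that any horseshoe $(I;J_1,\dots,J_s)$ for $g^m$ in $Y$ must already satisfy $I\subset Q$: if some $J_1\setminus Q\neq\emptyset$, strict monotonicity of $g$ on the added arcs forces the fixed point $z$ into the interior of $J_1$; then $z\notin J_2$, hence $z\notin g^m(J_2)$, contradicting $J_1\subset g^m(J_2)=I$. Once $I\subset Q$ the horseshoe lives in $X$ since $f|_Q=g|_Q$.

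Finally, the paper sidesteps your ``main obstacle'' entirely: rather than proving $\htop(f)=\max_i\htop(f_i)$, it fixes $\eps>0$, takes an ergodic $\mu$ with $h_\mu(f)>\htop(f)-\eps$ via the variational principle, and uses ergodicity to get $\mu(Q)\in\{0,1\}$, which selects one branch and gives the needed lower bound on the entropy of that branch's map directly.
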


\begin{proof}
It suffices to construct strictly increasing sequences $s_n,k_n$
of positive integers such that each $f^{k_n}$ has an $s_n$-horseshoe and
$\liminf_{n\to\infty}\frac{1}{k_n}\log (s_n)\ge \htop(f)$.
We shall prove the conclusion by performing induction on the maximal order  of quasi-arcs in $X$.

If the maximal order of quasi-arcs in $X$ is zero, i.e., $X$ is a graph,
then the result is just Theorem~\ref{thm:LM93}.
Now assume that the result holds for all quasi-graphs
with quasi-arcs of order at most $k$.
Let $X$ be a quasi-graph with $k+ 1$ being the maximal order of quasi-arcs in $X$.
Fix a presentation $X=G^*\cup \bigcup_{j=1}^n L_j$ provided by Theorem~\ref{thm:dec-qg} and let $f\colon X\to X$ be
 a continuous map  satisfying $\htop(f)>0$.
Since the topological entropy of $f$ and the one of $f$ restricted to the closure of all recurrent points (so-called \textit{Birkhoff center}) are the same (e.g., see \cite{ALM}), and since
by Proposition~\ref{prop:quasi-graph-onto} the set $\bigcap_{n=0}^\infty f^n(X)$ is also a quasi-graph, without loss of generality we may assume that
$X=\bigcap_{n=0}^\infty f^n(X)$, in particular, $f$ is surjective.

Fix any $0< \eps< \htop (f)$.
By the variational principle 
there exists  an ergodic invariant measure $\mu$ on $(X,f)$
such that $\htop(f)-\eps<h_\mu(f)$, where $h_\mu (f)$ denotes the measure-theoretic $\mu$-entropy of $f$. Set $Q= \bigcup_{j=1}^n\omega(L_j)$.
Then there are two possibilities: either
$\mu(Q)=0$ or $\mu(Q)=1$.

Let us consider firstly the case of $\mu(Q)=0$.
Following the proof of Lemma~\ref{lem:division}, we collapse the $\omega$-limit set $Q$
to a single point $p$, and get a factor map $\pi\colon (X,f)\to (G,g)$
by setting $\pi(Q)=\{p\}$.
Let $\nu= \pi (\mu)$, which in fact is measure-theoretically isomorphic to $\mu$.
Then $\htop(g)\geq h_\nu(g)=h_\mu(f)$.
As $G$ is a topological graph, by Theorem~\ref{thm:LM93},
there exist sufficiently large positive integers $s>4$ and $k$ such that
$g^k$ has an $s$-horseshoe $(I;J_1,\dotsc,J_s)$,
\[\frac{\log s}{k}> \htop(g)-\varepsilon>\htop (f)- 2 \varepsilon
\quad\text{ and }\quad \frac{\log 4}{k}<\varepsilon.\]
The point $p$ is contained in at most two intervals $J_i$.
Removing those intervals and shortening $I$ if necessary,
we get an $r$-horseshoe $(I'; J_1',\ldots, J_r')$ for $g^k$
such that $p\not\in I'$
and $r\geq \frac{s-2}{2}$.
Since the restricted factor map
$\pi\colon X\setminus Q\to G\setminus\{p\}$ is invertible,
we may view $(I'; J_1',\ldots, J_r')$ as an $r$-horseshoe for $f^k$.
Clearly $r>s/4$,
therefore
\[\frac{\log r}{k}\geq \frac{\log s}{k}-\frac{\log4}{k}
>\frac{\log s}{k}-\eps> \htop(g)-2\eps> \htop(f)-3\eps.\]
This concludes the first case.

Now, we consider the remaining case, when $\mu(Q)=1$.
Following the construction in the Claim in the proof of Lemma~\ref{lem:division},
we get a quasi-graph $Y$ and a continuous map $g\colon Y\to Y$ extending $f|_Q$, where $Y$ was obtained by adding some arcs
to $Q$.
It is clear from the construction that $g(Q)\subset Q$, so no point from $Q$
can map to  interiors of these new arcs.
By the definition of the map $g$, we know that $Y\setminus Q$ contains a fixed point $z$, which is the unique recurrent point of $g$ in $Y\setminus Q$.
By $f|_Q= g|_Q$ and the fact that topological entropy of a system is supported on the closure of its all recurrent points (cf. \cite{ALM}), we have $\htop(g)=\htop(Q,f)\geq h_\mu(f)$, where $\htop(Q,f)$ denotes the topological entropy of $f$ restricted to $Q$. As the maximal order of quasi-arcs of $Y$ is at most $k$, applying our inductive assumption we can choose integers $s$ and $m$ large enough such that $g^m$ has an $s$-horseshoe and $\frac{\log s}{m}> \htop (g)- \eps\ge h_\mu (f)- \eps> \htop (f)- 2 \eps$. We shall finish the proof by showing $I\subset Q$ once
$(I;J_1,\dotsc,J_s)$ is an $s$-horseshoe for $g^m$, because then $(I;J_1,\dotsc,J_s)$ is also an $s$-horseshoe for $f^m$ (here recall again that $f|_Q= g|_Q$). Clearly, we may assume that $s\geq 3$. We show firstly
that the arc $J_1$ is contained in $Q$.
Assume the contrary that $J_1\setminus Q\neq \emptyset$. Then $z\in J_1$, as otherwise by the strict monotonicity of the map $g$ we have $J_1\setminus g^{mk}(J_1)\neq \emptyset$.
But then, since $s>2$, without loss of generality we may assume that $z\not\in J_2$ and so $z\not\in g^m(J_2)$ (by the definition of the map $g$), which is impossible since $J_1\subset g^m(J_2)$ by the definition of a horseshoe.
Indeed, the only possibility is that $J_1\subset Q$.
But then $g^m(J_1)\subset Q$ by the invariance of $Q$, and so $I\subset Q$, which finishes the proof.
\end{proof}

\begin{rem}
In \cite{LM93}, Llibre and Misiurewicz also proved
that the topological entropy, as a function of a continuous map of a given graph
is lower-semicontinuous (cf.\ \cite[Theorem~C]{LM93}).
In fact, as shown by the proof of Theorem~\ref{thm:hors} the same result also holds for quasi-graph maps, because if a quasi-graph map $f$ has a strong $s$-horseshoe with a closed arc $I$ then by the definition of
the supremum metric, all sufficiently small perturbations of $f$ also have an $s$-horseshoe (see \cite{LM93} for a detailed proof).
\end{rem}

\section{Dynamics on dendrite maps} \label{main}
In this section, we study dynamics on dendrite maps.
First we need the following result on the structure of
$\omega$-limit set in a special class of dendrite maps.

\begin{thm}[{\cite[Theorem 1.2]{Ghasen}}]\label{thm:decomp}
	Let $X$ be a dendrite such that
	$End(X)$ is a closed set with finitely many accumulation points and
	let $f\colon X\to X$ be a continuous map with zero topological entropy. If
	$L= \omega_f (x)$ is an uncountable $\omega$-limit set for some $x\in X$, then for every $k\geq 1$ there is an $f$-periodic subdendrite $D_k$ of $X$ and an integer $n_k\geq 2$  with the following
	properties:
	\begin{enumerate}
		\item $D_k$ has period $\alpha_k:=n_1 n_2 \dots n_k$ for every $k\geq 1$,
		\item $\bigcup_{k=0}^{n_j -1}f^{k \alpha_{j-1}}(D_{j}) \subset D_{j-1}$ for every $j\geq 2$,
		
		\item $L \subset \bigcup_{i=0}^{\alpha_k -1}f^{i}(D_k)$ for every $k\geq 1$,
		
		\item for every $k\geq 1$ and each $0\leq i \leq \alpha_{k}-1$, $f(L \cap f^{i}(D_k))=L\cap f^{i+1}(D_k)$ and in particular $L \cap f^{i}(D_k)\neq \emptyset$, and
		
		\item for every $k\geq 1$ and all $0\leq i\neq j<\alpha_k $, $f^{i}(D_k)\cap f^{j}(D_k)$ has an empty interior.
	\end{enumerate}
\end{thm}

Recall that a dynamical system $(X,f)$ is \emph{mean equicontinuous}
if for every $\eps>0$ there exists $\delta>0$ such that
for every $x,y\in X$,
\[
d(x,y)<\delta\ \text{implies}\ \limsup_{n\to\infty}\frac{1}{n}\sum_{i=0}^{n-1}d(f^i(x),f^i(y))<\eps.
\]
It is shown in \cite{LTY15} (see also \cite{DG16}) that if a system $(X,f)$
is mean equicontinuous then every orbit closure is uniquely ergodic
and its unique invariant measure has discrete spectrum.

It is clear that the following result is a strong version of Theorem~\ref{thm:dendrite-finite}.
\begin{thm}\label{thm:dendrite-finite2}
	Let $X$ be a dendrite such that
	$End(X)$ is a closed set having finitely many accumulation points and
	let $f\colon X\to X$ be a continuous map with zero topological entropy.
	Fix any point $x\in X$.
	\begin{enumerate}
		\item If $\omega_f(x)$ is at most countable,
		then every invariant measure on $(\overline{\orb_f(x)},f)$ has discrete spectrum.
		\item
		If $\omega_f(x)$ is uncountable,
		then $(\overline{\orb_f(x)},f)$ is mean equicontinuous.
		In particular, $(\overline{\orb_f(x)},f)$ is uniquely ergodic
		and its unique invariant measure has discrete spectrum.
	\end{enumerate}
\end{thm}
\begin{proof}
	(1) If $\omega_f(x)$ is at most countable,
	then $\overline{\orb_f(x)}$ is also at most countable.
	So every ergodic invariant measure on $(\overline{\orb_f(x)},f)$
	is an equidistributed measure on a periodic orbit.
	By Theorem~\ref{thm:discrete}, every invariant measure on $(\overline{\orb_f(x)},f)$ has discrete spectrum.
	
	(2) We use the notation from Theorem~\ref{thm:decomp}.
	By \cite[Lemma 6.1]{Ghasen}, there is $k>0$ and $0\leq i_0<\alpha_k$ such that $f^{i_0}(D_k)$ is a free arc.
	As $\omega_f(x)$ is uncountable and contained in $\bigcup_{i=0}^{\alpha_k -1}f^{i}(D_k)$,
	there exists some $0\leq i_1\leq \alpha_k-1$
	such that $\omega_f(x) \cap f^{i_1}(D_k)$ is uncountable.
	Since for each $0\leq i \leq \alpha_{k}-1$, $f(\omega_f(x) \cap f^{i}(D_k))=\omega_f(x)\cap f^{i+1}(D_k)$,
	we have for each $0\leq i \leq \alpha_{k}-1$, $\omega_f(x) \cap f^{i}(D_k)$ is uncountable.
	
	Let $I= f^{i_0}(D_k)$ and $g=f^{\alpha_k}$.
	Then $g\colon I\to I$ is an interval map.
	It is clear that $g$ also has zero topological entropy.
	As $\omega_f(x)\cap I$ is uncountable, $\omega_f(x)$ has a nonempty intersection with interior of the interval $I$, then there exists $q\in\mathbb{N}$
	such that $f^q(x)\in I$.
	Let $y=f^q(x)$. Then $\omega_g(y)$ is either a finite set
	or an uncountable set without periodic points (see e.g. \cite[Proporsition 5.23]{R17}).
	Note that $\omega_f(x)=\bigcup_{i=0}^{\alpha_k-1}f^i(\omega_g(y))$.
	So the only possibility is that $\omega_g(y)$ is an uncountable set without periodic points, and $\omega_f(x)$ does not contain periodic points.
	By \cite[Lemma 2.2]{Ghasen}, for $0\leq i\neq j<\alpha_k $, the subset $f^{i}(D_k)\cap f^{j}(D_k)$ contains at most one point.
	As $f^{i}(D_k)\cap f^{j}(D_k)$ is $f^{\alpha_k}$-invariant,
	$f^{i}(D_k)\cap f^{j}(D_k)$ consists of a fixed point of $f^{\alpha_k}$ if it is not empty.
	Then
	$f^{i}(D_k)\cap f^{j}(D_k) \cap \overline{\orb_f(x)} =\emptyset$.
  By \cite[Lemma 3.3]{LOYZ17}, $(\overline{\orb_g(y)},g)$ is mean equicontinuous.
  Then for every $i=0,1,\dotsc,\alpha_k-1$, $(\overline{\orb_g(f^i(y))},g)$
  is also mean equicontinuous.
  As $\overline{\orb_g(f^i(y))}\subset f^i(f^{i_0}(D_k))$
  and $\overline{\orb_g(f^i(y))}\cap \overline{\orb_g(f^j(y))}
  =\emptyset$ for $i\neq j$,
  one has $(\bigcup_{i=0}^{\alpha_k-1} \overline{\orb_g(f^i(y))},g)$
  is mean equicontinuous.
  Note that $\bigcup_{i=0}^{\alpha_k-1} \overline{\orb_g(f^i(y))}
  =\overline{\orb_f(y)}\cup\{x,f(x),\dotsc,f^{q-1}(x)\}$,
  $(\overline{\orb_f(x)},g)$ is mean equicontinuous and then so is
  $(\overline{\orb_f(x)},f)$.
\end{proof}

As a consequence of Theorem~\ref{thm:dendrite-finite2}, we obtain another proof of \cite[Theorem 4.10]{Marzougi}.

\begin{cor}
	Let $X$ be a dendrite such that
	$End(X)$ is a closed set with finitely many accumulation points and
	let $f\colon X\to X$ be a continuous map with zero topological entropy.
	Then the M\"obius function is linearly
	disjoint from the system $(X,f)$.
\end{cor}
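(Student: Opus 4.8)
The plan is to decompose the problem according to the orbit structure of points in $X$ and reduce everything to the two cases already handled by Proposition~\ref{prop:dendrite-finite}. First I would recall, via Lemma~\ref{lem:invertible-MDC} and the standard reductions summarized in Theorem~\ref{discrete}, that to verify M\"obius disjointness for $(X,f)$ it suffices to verify it for every orbit closure $(\overline{\orb_f(x)},f)$, since the linear disjointness condition is a pointwise statement: the defining limit must vanish for each $x\in X$ and each $\varphi\in C(X,\mathbb{C})$, and replacing $X$ by $\overline{\orb_f(x)}$ (with $\varphi$ restricted) does not change that limit. Each such orbit closure inherits zero topological entropy.

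Next I would split into cases according to whether $\omega_f(x)$ is at most countable or uncountable. If $\omega_f(x)$ is at most countable, then by Proposition~\ref{prop:dendrite-finite}(1) every invariant measure of $(\overline{\orb_f(x)},f)$ has discrete spectrum; if $\omega_f(x)$ is uncountable, then by Proposition~\ref{prop:dendrite-finite}(2) the system $(\overline{\orb_f(x)},f)$ is uniquely ergodic with its unique invariant measure having discrete spectrum. In either case, every invariant measure of $(\overline{\orb_f(x)},f)$ has discrete spectrum. Applying Theorem~\ref{discrete} to the zero-entropy system $(\overline{\orb_f(x)},f)$ then yields that the M\"obius function is linearly disjoint from $(\overline{\orb_f(x)},f)$.

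Finally I would assemble the conclusion: since the M\"obius function is linearly disjoint from every orbit closure of $(X,f)$, it is linearly disjoint from $(X,f)$ itself. Concretely, given $x\in X$ and $\varphi\in C(X,\mathbb{C})$, the orbit of $x$ lies in $Y:=\overline{\orb_f(x)}$, the restriction $\varphi|_Y$ is continuous on $Y$, and $\lim_{N\to\infty}\frac1N\sum_{n=1}^N\mu(n)\varphi(f^n(x))=\lim_{N\to\infty}\frac1N\sum_{n=1}^N\mu(n)(\varphi|_Y)(f^n(x))=0$ by the previous paragraph. This holds for all $x$ and $\varphi$, which is exactly the assertion of the corollary.

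I expect essentially no obstacle here: all the real work has already been done in Lemma~\ref{lem:decomp}, Proposition~\ref{prop:dendrite-finite}, and Theorem~\ref{discrete}. The only point requiring a word of care is the passage to orbit closures, i.e. observing that M\"obius disjointness is a property that can be checked on each orbit closure separately — but this is immediate from the pointwise form of the definition and is precisely the reduction already invoked in the discussion preceding Lemma~\ref{lem:invertible-MDC}.
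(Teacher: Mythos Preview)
Your proposal is correct and follows essentially the same approach as the paper: the paper's proof is the single sentence ``applying directly Theorem~\ref{discrete} and Proposition~\ref{prop:dendrite-finite},'' and your argument simply unpacks this by reducing to orbit closures, invoking Proposition~\ref{prop:dendrite-finite}(1) or (2) according to the cardinality of $\omega_f(x)$, and then applying Theorem~\ref{discrete}. The only minor quibble is that Lemma~\ref{lem:invertible-MDC} itself is not what justifies the reduction to orbit closures---that reduction is the elementary pointwise observation you spell out in your final paragraph (and which the paper mentions in the text preceding that lemma)---but you already note this yourself.
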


\begin{rem}
	In Theorem~\ref{thm:dendrite-finite2},
	it would be interesting to know that whether every orbit closure
	is uniquely ergodic, in other words,
	if $\omega_f(x)$ is at most countable, does there exist only one
	periodic orbit in $\omega_f(x)$?
\end{rem}

Finally, we provide a proof of Theorem~\ref{thm:Gehman-dendrite} and Corollary~\ref{cor:factors}, respectively.

\begin{proof}[Proof of Theorem~\ref{thm:Gehman-dendrite}]
Let $(V,\sigma)$ be a one-sided subshift over a finite alphabet $\mathcal{A}$.
Observe, that without loss of generality we may assume that $V$ admits no isolated points. Namely, if it was not the case, let us consider its product with any infinite, uniquely ergodic proximal subshift. There are numerous techniques to obtain such subshifts, e.g. see Examples 8.3--8.7 in \cite{KKO}. Clearly such subshift has 
a unique fixed point, hence the product
has the same simplex of invariant measures as $(V,\sigma)$ (there is natural isomorphism between invariant measures of $(V,\sigma)$ and the newly constructed product system), $(V,\sigma)$ is its subsystem (as the fiber over fixed point) and the product system can be regarded as a one-sided subshift over a finite alphabet $\mathcal{A}'\subset \mathcal{A}\times \mathcal{A}$.

By a standard technique, on the Gehman dendrite $D_*$ we can construct a surjective map $f_*$, such that there is a point $c\in D_*$ (fixed by $f_*$)
	and an $f_*$-invariant set $\Sigma \subset D_*\setminus \{c\}$ such that $(\Sigma, f_*)$ is conjugated to the one-sided full shift on $2$ symbols
	and for every $x\in D_*\setminus \Sigma$ we have $(f_*)^m(x)=c$ for some $m\in \mathbb{N}$ (e.g., see \cite[Example~6]{Kocan}).
	Fix a positive integer $n$ such that $2^n$ is larger than the cardinality of $\mathcal{A}$ (resp. $\mathcal{A}'$). Now over the Gehman dendrite $D_*$ we consider the map $f= (f_*)^n$: in $D_*\setminus \{c\}$ there exists an $f$-invariant set $\Omega$ such that $(\Omega, f)$ is conjugated to the one-sided full shift on $2^n$ symbols (and hence we may view $(V, \sigma)$ as a subsystem of $(\Omega, f)$), furthermore, $f (c)= c$ and for every $x\in D_*\setminus \Omega$ we have $f^m (x)= c$ for some $m\in \mathbb{N}$.
	
	As $D_*$ is a dendrite, for any two different points $x_1, x_2\in D_*$ there exists a unique arc connecting them (denote it by $[x_1, x_2]$). By above discussions we may view $V\subset \Omega$, and then we consider an arcwise connected $f$-invariant
	set $D=\bigcup_{y\in V} [y, c]$, where $f$ acts naturally over $D$. As $V$ does not contain isolated points, $D$ is again the Gehman dendrite (see \cite[Theorem~4.1]{Charatonik}).
	Then $(D, f)$ contains $(V, \sigma)$ as a subsystem, and is a surjective dynamical system over a Gehman dendrite, where $f$ maps $[y, c]$ exactly to $[f (y), c]$ for every $y\in V$. Furthermore, by the above construction we know that for every $x\in D\setminus V$ we have $f^m (x)= c$ for some $m\in \mathbb{N}$, and then besides of ergodic invariant measures for $(V, h)$, the system $(D, f)$ admits only one more ergodic invariant measure whose support is the fixed point $c$. 
\end{proof}

\begin{proof}[Proof of Corollary~\ref{cor:factors}]
Fix any dynamical system $(X,f)$ with zero topological entropy.
In \cite{Boyle02} Boyle \textit{et al} showed that
$(X,f)$
is a factor of a two-sided subshift with zero topological entropy $(U,g)$ (see also \cite[Theorem~6.9.9]{Downar}).
If $(V, h)$ denotes one-sided subshift generated by $(U, g)$, that is,
$$V= \{(w_0, w_1, w_2, \ldots): (\ldots, w_{- 1}, w_0, w_1, w_2, \ldots)\in U\},$$
then it is not hard to check that if the M\"obius function is linearly
disjoint from $(V,h)$ then it is disjoint from $(X,f)$. Simply, we may view $(U,g)$ as a shift homeomorphism on the natural extension (inverse limit)  generated by $(V,h)$.
Since disjointness with M\"obius function is a heredetary property of subsystems, the result follows by Theorem~\ref{thm:Gehman-dendrite}.
\end{proof}


\section*{Acknowledgements}

This work was partially done during a series of visit of J. Li and P. Oprocha to the School of Mathematical Sciences, Fudan University. They both gratefully acknowledge the hospitality of Fudan University. We thank T. Downarowicz and R. Zhang for many fruitful discussions on properties of maps with zero topological entropy, and thank El~H.~El~Abdalaoui for bringing \cite{Glasner} into our attention. The authors are grateful to M.~Lemanczyk for numerous remarks on invariant measures with discrete spectrum.

J. Li was  supported by NSFC
Grant no.\ 11771264 and NSF of Guangdong Province (2018B030306024). P. Oprocha was supported by National Science Centre, Poland (NCN), grant no. 2015/17/B/ST1/01259
and by the Faculty of Applied Mathematics AGH UST statutory tasks within subsidy of Ministry of Science and Higher Education.
G. Zhang was supported by NSFC
Grants no. 11671094, 11722103 and 11731003.

\end{document}